\newenvironment{myenum}[1]
	{	
		
		\begin{enumerate}}
	{\end{enumerate}}
\newenvironment{myenump}[1]
	{	
		
		\begin{enumerate}}
	{\end{enumerate}}
\DeclareMathOperator{\Aut}{Aut}
\DeclareMathOperator{\Inn}{Inn}
\DeclareMathOperator{\Out}{Out}
\DeclareMathOperator{\MCG}{MCG}
\DeclareMathOperator{\PGL}{PGL}
\DeclareMathOperator{\Centr}{Z}
\DeclareMathOperator{\Norm}{N}
\DeclareMathOperator{\Fix}{Fix}
\DeclareMathOperator{\Id}{Id}
\DeclareMathOperator{\CAT}{CAT}
\newcommand{\NN}{\mathbf{N}}
\newcommand{\ZZ}{\mathbf{Z}}
\newcommand{\RR}{\mathbf{R}}
\newcommand{\beep}{AS }
\newcommand{\Beep}{AS }
\newcommand{\BP}{AS}
\newtheorem{theorem}{Theorem}[section]
\newtheorem{alphtheorem}{Theorem}
\newtheorem{lemma}[theorem]{Lemma}
\newtheorem{prop}[theorem]{Proposition}
\newtheorem{corollary}[theorem]{Corollary}
\theoremstyle{remark}
\newtheorem*{observation*}{Observation}
\newtheorem{question}[theorem]{Question}
\newtheorem{remark}[theorem]{Remark}
\newtheorem{example}[theorem]{Example}
\title{Virtually splitting the map from $\Aut(G)$ to $\Out(G)$}
\author{Mathieu Carette}
\address{\tt Universit\'e catholique de Louvain, 
IRMP,
Chemin du Cyclotron 2, bte L7.01.01, 
1348 Louvain-la-Neuve,
Belgium}
\email{\tt mathieu.carette@uclouvain.be}
\thanks{The author is a Postdoctoral Researcher of the F.R.S.-FNRS (Belgium).}
\subjclass[2010]{Primary 20F28, secondary 20E36, 20F55, 20F67.}
\keywords{Outer automorphism group, Coxeter group, residual finiteness.} 
\begin{document}
	
	\begin{abstract}
		We give an elementary criterion on a group $G$ for the map $\Aut(G) \to \Out(G)$ to split virtually. This criterion applies to many residually finite $\CAT(0)$ groups and hyperbolic groups, and in particular to all finitely generated Coxeter groups. As a consequence the outer automorphism group of any finitely generated Coxeter group is residually finite and virtually torsion-free.
	\end{abstract}
	\maketitle
	
	Given a group epimorphism $f : G \to H$ we say that $f$ \textbf{splits virtually} if there exist a finite index subgroup $H' < H$ and a homomomorphism $g : H' \to G$ with $f \circ g = \Id$. In other words, the exact sequence $\ker f \to f^{-1}(H') \to H'$ splits, so that $G$ is virtually a semidirect product $\ker f \rtimes H'$.
	
	Given a finitely generated residually finite group $G$,  we investigate when the quotient map $\Aut(G) \to \Out(G) = \Aut(G) / \Inn(G)$ splits virtually. This implies in particular that $\Out(G)$ is residually finite (see Theorem~\ref{theorem:Baumslag} below). Note that there are examples of groups with residually finite $\Out(G)$ but such that the map $\Aut(G) \to \Out(G)$ does not split virtually. Indeed, if $G = \pi_1(S_g)$ for $S_g$ a closed orientable surface of genus $g \geq 3$, then $\Out(G) = \MCG^\pm(S_g)$ is residually finite \cite{Grossman74} but Mess showed in \cite{Mess90} that no finite index subgroup of the Torelli subgroup of $\Out(G)$ lifts to a subgroup of $\Aut(G)$.
	
	We give an elementary criterion, namely the existence of an \emph{\beep subgroup} $H$ of $G$, which implies that the map $\Aut(G) \to \Out(G)$ splits virtually. The idea is to lift outer automorphisms to automorphisms that fix $H$ pointwise. As a particular case of our criterion we show 
	
	\begin{alphtheorem} \label{thm:main_sample} Let $G$ be a finitely generated residually finite group, and let $H$ be a finite subgroup of $G$ with finite centralizer in $G$. Suppose moreover that $G$ has finitely many conjugacy classes of finite subgroups. Then the map $\Aut(G) \to \Out(G)$ splits virtually. In particular $\Out(G)$ is residually finite.
	\end{alphtheorem}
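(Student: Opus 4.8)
The plan is to realize a finite-index subgroup of $\Out(G)$ by automorphisms that fix $H$ pointwise, and then to remove a finite ambiguity using residual finiteness of $\Aut(G)$. Write $q \colon \Aut(G) \to \Out(G)$ for the quotient map, let $\iota_g \colon x \mapsto gxg^{-1}$ denote the inner automorphism by $g$, and introduce the setwise and pointwise stabilizers
$$\Aut(G,\{H\}) = \{\phi \in \Aut(G) : \phi(H) = H\}, \qquad P = \{\phi \in \Aut(G) : \phi|_H = \Id_H\}.$$
The key point I would establish is that the composite $P \hookrightarrow \Aut(G) \xrightarrow{q} \Out(G)$ has image of finite index and kernel finite.

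First I would show $q(P)$ has finite index in $\Out(G)$. Since $H$ is finite, every $\phi \in \Aut(G)$ sends $H$ to a finite subgroup, so $\Aut(G)$ acts on the set $S$ of conjugacy classes of finite subgroups of $G$ by $\phi \cdot [K] = [\phi(K)]$; by hypothesis $S$ is finite. Inner automorphisms preserve each conjugacy class, so this action factors through $\Out(G)$, and the stabilizer $\Out_0$ of the class $[H]$ has index at most $|S|$, hence is finite-index in $\Out(G)$. For $[\phi] \in \Out_0$ one has $g\phi(H)g^{-1} = H$ for some $g \in G$, so $(\iota_g \circ \phi)(H) = H$ and $\iota_g \circ \phi \in \Aut(G,\{H\})$ represents $[\phi]$; thus $q(\Aut(G,\{H\})) = \Out_0$. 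Restriction to $H$ defines a homomorphism $\Aut(G,\{H\}) \to \Aut(H)$ with kernel exactly $P$, and since $H$ is finite the group $\Aut(H)$ is finite, so $P$ has finite index in $\Aut(G,\{H\})$. Combining, $q(P)$ has finite index in $q(\Aut(G,\{H\})) = \Out_0$, and therefore in $\Out(G)$.

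Next I would compute the kernel $K := P \cap \Inn(G)$. An inner automorphism $\iota_g$ fixes $H$ pointwise precisely when $g$ centralizes $H$, so $K$ is the image of $\Centr_G(H)$ in $\Inn(G) = G/\Centr(G)$, namely $\Centr_G(H)/\Centr(G)$; by hypothesis $\Centr_G(H)$ is finite, so $K$ is finite. To split off this finite kernel I would invoke Theorem~\ref{theorem:Baumslag}: $\Aut(G)$ is residually finite, hence so is its subgroup $P$. As $K \trianglelefteq P$ is finite, residual finiteness yields a finite-index subgroup $P' \le P$ with $P' \cap K = 1$. Then $q|_{P'} \colon P' \to \Out(G)$ is injective with image $Q' := q(P')$ of finite index in $q(P)$, hence in $\Out(G)$, and its inverse $Q' \to P' \subset \Aut(G)$ is a homomorphic section of $q$ over $Q'$, i.e. a virtual splitting. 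Finally, $Q' \cong P'$ embeds in the residually finite group $\Aut(G)$ and has finite index in $\Out(G)$, so $\Out(G)$ is residually finite, as recorded in Theorem~\ref{theorem:Baumslag}.

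I expect the main obstacle to be the finite-index claim for $q(P)$. The action on conjugacy classes only controls $\phi(H)$ up to conjugacy, that is, up to the \emph{setwise} stabilizer $\Aut(G,\{H\})$, whereas the clean kernel computation requires working with the \emph{pointwise} stabilizer $P$; the finiteness of $\Aut(H)$ is exactly what bridges this gap. The two hypotheses enter in complementary ways — finitely many conjugacy classes of finite subgroups makes the image finite-index, while finiteness of $\Centr_G(H)$ makes the kernel finite — and residual finiteness of $\Aut(G)$ is the ingredient that upgrades ``finite kernel'' to an honest section.
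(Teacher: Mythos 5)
Your proof is correct and takes essentially the same route as the paper: your finite-index and finite-kernel claims for the pointwise stabilizer $P = \Aut_{\underline H}(G)$ are exactly the paper's definition of an \beep subgroup, verified there by the same two arguments (the action on the finite set of conjugacy classes of finite subgroups, and the restriction homomorphism to the finite group $\Aut(H)$) via Proposition~\ref{prop:restatement} and Corollary~\ref{cor:selfnormgroup}. Your final step, using Baumslag's theorem to find a finite-index subgroup of $P$ meeting the finite kernel trivially and inverting $q$ on its image, is precisely the paper's Proposition~\ref{prop:strategy}.
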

	
	We further give a geometric version of this criterion applying to many $\CAT(0)$ groups with torsion.
	\begin{alphtheorem} \label{thm:geometricsplit}
		Let $G$ be a residually finite group acting properly discontinuously and cocompactly on a complete $\CAT(0)$ space $X$. Suppose there exists a finite subgroup $H < G$ such that the set of fixed points of $H$ is bounded. Then the map $\Aut(G) \to \Out(G)$ splits virtually. In particular $\Out(G)$ is residually finite.
	\end{alphtheorem}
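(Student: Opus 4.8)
The plan is to deduce Theorem~\ref{thm:geometricsplit} from Theorem~\ref{thm:main_sample}, by checking that the algebraic hypotheses of the latter all follow from the geometric setup. Since the action is properly discontinuous and cocompact on the complete (hence geodesic) \CAT(0) space $X$, the Milnor--\v{S}varc lemma guarantees that $G$ is finitely generated, and residual finiteness is assumed. The subgroup $H$ is finite by hypothesis. Thus it remains to establish two points: that the centralizer $\Centr_G(H)$ is finite, and that $G$ has only finitely many conjugacy classes of finite subgroups. The first is where the boundedness of the fixed-point set enters; the second is a standard feature of proper cocompact \CAT(0) actions.

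For the centralizer, I would argue via the circumcenter. A finite group of isometries of a complete \CAT(0) space has nonempty fixed-point set by the Bruhat--Tits fixed point theorem, and $\Fix(H)$ is moreover closed and convex. The key observation is that $\Centr_G(H)$ preserves $\Fix(H)$: if $g$ commutes with every element of $H$ and $x \in \Fix(H)$, then $h(gx) = g(hx) = gx$ for all $h \in H$, so $gx \in \Fix(H)$. By hypothesis $\Fix(H)$ is bounded, hence it admits a unique circumcenter $c$, the center of its smallest enclosing ball. Because $\Centr_G(H)$ acts on $\Fix(H)$ by isometries, it must fix $c$, so $\Centr_G(H)$ is contained in the point stabilizer $G_c$. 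As the action is properly discontinuous, $G_c$ is finite, and therefore so is $\Centr_G(H)$.

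For the second point I would invoke the well-known fact that a group acting properly discontinuously and cocompactly by isometries on a complete \CAT(0) space has only finitely many conjugacy classes of finite subgroups, as established by Bridson and Haefliger. With both facts in hand, the pair $G, H$ satisfies precisely the hypotheses of Theorem~\ref{thm:main_sample}: $G$ is finitely generated and residually finite, $H$ is a finite subgroup with finite centralizer, and $G$ has finitely many conjugacy classes of finite subgroups. Applying that theorem yields immediately that $\Aut(G) \to \Out(G)$ splits virtually and that $\Out(G)$ is residually finite.

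The only genuinely geometric input, and hence the step I expect to require the most care, is the circumcenter argument establishing finiteness of $\Centr_G(H)$. One must verify that $\Fix(H)$ is a bounded, closed, convex, and in particular nonempty, subset on which $\Centr_G(H)$ acts isometrically, so that uniqueness of the circumcenter in \CAT(0) geometry forces a common fixed point. Everything else in the argument is either a standing hypothesis or a citation, so the proof reduces essentially to translating the bounded-fixed-point condition into the finiteness of a centralizer.
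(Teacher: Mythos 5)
Your proof is correct, and it takes a slightly different (and arguably more economical) route than the paper. The paper does not deduce Theorem~\ref{thm:geometricsplit} from Theorem~\ref{thm:main_sample}; instead it derives both in parallel from the \beep subgroup machinery: it passes from $H$ to a \emph{maximal} finite subgroup containing it, proves that such a subgroup is self-normalized (Lemma~\ref{lemma:maxelliptic}, via the same circumcenter argument you use, applied to the normalizer rather than the centralizer), and then verifies the hypotheses of its geometric criterion (Proposition~\ref{prop:CAT(0)}, via Proposition~\ref{prop:selfnorm}) before invoking Proposition~\ref{prop:strategy}. You instead observe that the geometric hypotheses directly imply the algebraic ones of Theorem~\ref{thm:main_sample}: boundedness of $\Fix H$ plus the Bruhat--Tits circumcenter gives finiteness of $\Centr_G(H)$ (no passage to a maximal subgroup, no self-normalization needed), and properness plus cocompactness gives finitely many conjugacy classes of finite subgroups. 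Both proofs rest on exactly the same two geometric facts, so the difference is organizational rather than substantive; what the paper's longer route buys is a proposition (Proposition~\ref{prop:CAT(0)}) that applies to \emph{non-proper} isometric actions, where elliptic subgroups can be infinite --- this generality is used later for Bass--Serre trees of free products and JSJ decompositions, and cannot be recovered from Theorem~\ref{thm:main_sample}. A small point in your favor: you make explicit, via Milnor--\v{S}varc, the finite generation of $G$ needed to apply Baumslag's theorem inside Theorem~\ref{thm:main_sample}, a step the paper leaves implicit.
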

	
	When $G$ is a finitely generated Coxeter group, we will show that one can take $H$ above to be a maximal finite subgroup. Using further the fact that $G$ is $\mathbf{R}$-linear, we obtain as a consequence the following
	\begin{alphtheorem} Let $G$ be a finitely generated Coxeter group. Then the map $\Aut(G) \to \Out(G)$ splits virtually. Moreover $\Out(G)$ is virtually residually $p$ for all but finitely many primes $p$. In particular $\Out(G)$ is residually finite and virtually torsion-free.  \label{thm:Coxeter}
	\end{alphtheorem}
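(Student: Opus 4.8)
The plan is to realise $G$ as a group acting on its Davis complex, apply Theorem~\ref{thm:geometricsplit} with $H$ a maximal finite parabolic subgroup, and then exploit the Tits representation to upgrade residual finiteness of $\Out(G)$ to the residually-$p$ statement.

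Write $G=W$ with Coxeter system $(W,S)$, $S$ finite. The Tits (geometric) representation $\rho\colon W\to\GL(V)$, $V=\RR^{S}$, is faithful, so $W$ is $\RR$-linear and hence residually finite by Mal'cev's theorem; this supplies the standing hypothesis of Theorems~\ref{thm:main_sample} and~\ref{thm:geometricsplit}. Let $\Sigma$ be the Davis complex of $(W,S)$: it is a complete $\CAT(0)$ space on which $W$ acts properly discontinuously and cocompactly, every finite subgroup of $W$ is conjugate into a spherical parabolic subgroup $W_{T}$ ($T\subseteq S$ with $W_{T}$ finite), and these are the vertex stabilisers. I would take $H=W_{T}$ for a maximal spherical subset $T\subseteq S$, which is then a maximal finite subgroup of $W$. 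By Theorem~\ref{thm:geometricsplit} it suffices to show that $\Fix(H)\subseteq\Sigma$ is bounded; equivalently, since the centraliser of a finite group preserves its convex fixed-point set and hence fixes its circumcentre, it suffices to verify the hypothesis of Theorem~\ref{thm:main_sample} that $\Centr_{G}(H)$ is finite, the finiteness of the number of conjugacy classes of finite subgroups being clear because there are finitely many spherical $T$ and each $W_{T}$ is finite.

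The first genuine step is to prove that $\Fix(W_{T})$ is bounded for $T$ maximal spherical. Maximality forces $T^{\perp}=\{s\in S\setminus T : s\text{ commutes with every element of }T\}=\varnothing$, since otherwise $T\cup\{s\}$ would be spherical for any $s\in T^{\perp}$. Suppose $\Fix(W_{T})$ were unbounded. It is a closed convex $\Norm_{W}(W_{T})$-invariant subcomplex, so by cocompactness its setwise stabiliser acts cocompactly on it and contains either an element of infinite order or a reflection whose wall contains $\Fix(W_{T})$; in either case one produces an isometry centralising $W_{T}$ but not lying in $W_{T}$. Translating this back to $(W,S)$ yields, in each irreducible component meeting $T$, either a reflection outside $W_{T}$ commuting with all of $T$ --- impossible as $T^{\perp}=\varnothing$ --- or a direct infinite factor commuting with $W_{T}$, which is excluded because an irreducible infinite Coxeter group has trivial centre and admits no centralising translation. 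Hence $\Fix(W_{T})$ is bounded, Theorem~\ref{thm:geometricsplit} applies, and we conclude both that $\Aut(G)\to\Out(G)$ splits virtually and that $\Out(G)$ is residually finite. I expect the careful version of this geometric dichotomy to be the first main obstacle.

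It remains to upgrade to: $\Out(G)$ is virtually residually $p$ for almost all $p$, hence virtually torsion-free. Here I would use the virtual section $s\colon\Out_{0}\to\Aut(G)$ furnished above, whose image lies in the subgroup $A$ of automorphisms fixing $H=W_{T}$ pointwise, to realise a finite-index subgroup of $\Out(G)$ as an $\RR$-linear group. After passing to a finite-index subgroup, one arranges that the lifted automorphisms preserve the set of reflections of $(W,S)$; each such $\phi$ then permutes the walls and so is intertwined with a linear map, giving $\rho\circ\phi=c_{M_{\phi}}\circ\rho$ for some $M_{\phi}\in\GL(V)$ normalising $\rho(W)$. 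Any two choices of $M_{\phi}$ differ by an element of $\Centr_{\GL(V)}(\rho(W))$, so $\phi\mapsto M_{\phi}$ descends to a homomorphism into $\Norm_{\GL(V)}(\rho(W))/\Centr_{\GL(V)}(\rho(W))$, again a linear group in characteristic zero, and it is injective on the lift because an automorphism inducing the identity on $\rho(W)$ is the identity (as $\rho$ is faithful). This exhibits a finite-index subgroup of $\Out(G)$ as a finitely generated characteristic-zero linear group, and the classical fact that such groups are virtually residually $p$ for all but finitely many $p$ --- hence virtually torsion-free --- then yields the statement for $\Out(G)$. The delicate point here, and the second main obstacle, is to show that enough of the lifted automorphisms are induced by $\GL(V)$-conjugation, with the finiteness of $\Centr_{G}(H)$ guaranteeing that the resulting representation stays faithful modulo a finite kernel.
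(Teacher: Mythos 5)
Your outline for the splitting statement coincides with the paper's (Davis complex, maximal finite subgroup, Theorem~\ref{thm:geometricsplit}, Mal'cev), but both of the steps you flag as ``main obstacles'' are genuine gaps, and the second one is fatal to your argument. Take first the residually-$p$ part. Your claim that, after passing to a finite-index subgroup of $\Aut(G)$, every reflection-preserving automorphism $\phi$ satisfies $\rho\circ\phi=c_{M_\phi}\circ\rho$ for some $M_\phi\in\GL(V)$ normalising $\rho(W)$ is false, and cannot be rescued by passing to deeper finite-index subgroups. Concretely, let $W=\ZZ/2\ZZ\ast\ZZ/2\ZZ\ast\ZZ/2\ZZ=\langle a,b,c\rangle$ be the universal Coxeter group of rank $3$ and $\phi$ the partial conjugation fixing $a,b$ and sending $c\mapsto aca$: it preserves reflections, yet a direct computation with the Tits form (signature $(2,1)$, all off-diagonal entries $-1$) shows that no $M\in\GL_3(\RR)$ commutes with $\rho(a),\rho(b)$ and conjugates $\rho(c)$ to $\rho(aca)$. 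Structurally, $\rho$ is irreducible here, so by Schur's lemma any element of $\Norm_{\GL(V)}(\rho(W))$ preserves the Tits form up to scalar; since $\rho(W)$ is a lattice in the isometry group of the hyperbolic plane (the ideal triangle reflection group), its normaliser there contains it with finite index, so only \emph{finitely many} outer classes of $W$ are linearly induced, while $\Out(W)$ is infinite. Hence no finite-index subgroup of $\Aut(W)$ consists of linearly induced automorphisms, and your putative embedding of a finite-index subgroup of $\Out(G)$ into $\Norm_{\GL(V)}(\rho(W))/\Centr_{\GL(V)}(\rho(W))$ does not exist. The paper never linearises $\Out(G)$: it applies Platonov's theorem to $G$ itself, then Lubotzky's theorem (the residually-$p$ analogue of Baumslag's theorem, Section~\ref{sec:virtually_residually_p}) to transfer ``virtually residually $p$'' from $G$ to $\Aut(G)$, and finally uses the virtual splitting to embed a finite-index subgroup of $\Out(G)$ into $\Aut(G)$. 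You never invoke Lubotzky's result, and without it (or a substitute) your second half collapses.

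The first gap is in your proof that $\Fix(W_T)$ is bounded for $T$ maximal spherical. From an infinite-order element centralising $W_T$ you cannot conclude that there is ``a reflection outside $W_T$ commuting with all of $T$, or a direct infinite factor commuting with $W_T$'': centralisers and normalisers of spherical parabolic subgroups have a richer structure (by Brink--Howlett theory, already the centraliser of a single reflection can contain a nonabelian free group whose generators are neither reflections nor related to any direct product decomposition), and $T^{\perp}=\varnothing$ only rules out commuting \emph{Coxeter generators}, not these other sources of centralising elements. So the ``translation back to $(W,S)$'' is an unproven claim, not a routine verification. The paper's argument is far more economical and avoids normaliser theory entirely: maximal cells of the Davis complex correspond to maximal finite subgroups $H$; the metric on such a cell $Y$ is modeled on the convex hull of an orbit of the reflection representation of $H$, whose unique fixed point is the centre of $Y$, an \emph{interior} point of $Y$; since $\Fix H$ is convex and meets $Y$ only in that centre, $\Fix H$ is a single point. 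If you want to salvage your combinatorial route, you would in effect have to reprove this geometric fact (or a theorem of Brink--Howlett type on finiteness of $\Norm_W(W_T)$ for $T$ maximal spherical), which is precisely the work your sketch elides.
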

	
	Recall that \textbf{Coxeter group} is a group $G$ defined by a presentation of the form \[\langle S \mid (st)^{m_{st}} = 1 \text{ for each } s,t\in S \text{ with } m_{st} \neq \infty \rangle\] where $m_{ss} = 1$ and $m_{st} = m_{ts} \in \NN_{\geq 2} \cup \{\infty\}$ for each $s \neq t \in S$. If $m_{st} \in \{2,\infty\}$ for $s \neq t$ then the Coxeter group is called \textbf{right-angled}.
	
	Splittings of the map $\Aut(G) \to \Out(G)$ were described by James \cite{James88} and Tits \cite{Tits88} for some right-angled Coxeter groups $G$. This was later extended to all right-angled Coxeter groups by M\"uhlherr \cite{Muhlherr98} and to graph products of finite cyclic groups by Gutierrez, Piggott and Ruane \cite{GutPigRua12}. More generally, for $G$ a graph product of cyclic groups, they give a description of the subgroup $\Aut'(G) < \Aut(G)$ which preserves the conjugacy class of each vertex group. This is then used to show that the map $\Aut'(G) \to \Out'(G)$ splits virtually.
	
	We emphasize here that the structure of the group $\Aut(G)$ is not well understood for $G$ a general finitely generated Coxeter group. In particular, it is not known in general whether $\Aut(G)$ is finitely generated, or exactly for which $G$ is $\Out(G)$ infinite. This is related to M\"uhlherr's twist conjecture \cite{Muhlherr06} which gives a conjectural description of all Coxeter generating sets of a Coxeter group. Particular cases of this conjecture have been settled, among which \emph{twist-rigid} Coxeter groups \cite{CapPrz10}, which consequently all have a finite outer automorphism group. 
	
	Also related is Question 2.1 in Bestvina's problem list which asks if a $\CAT(0)$ group has an infinite outer automorphism group if and only if it has an infinite order Dehn twist.	In the case of a Coxeter group $G$ an infinite order Dehn twist means a splitting as an amalgamated free product $G = A \ast_C B$ with $A \neq C \neq B$ and such that $\Centr_A(C)$ is infinite. We mention here that the answer to the above question is positive for all hyperbolic groups \cite{Levitt05, Carette11} (see also \cite{GuirLev12} for a statement on relatively hyperbolic groups). The difficulty in determining which hyperbolic groups $G$ have infinite $\Out(G)$ lies mostly in the presence of torsion (the torsion-free case being settled by a theorem of Paulin \cite{Paulin}). In contrast, our strategy for virtually splitting the map $\Aut(G) \to \Out(G)$ relies in many instances on the presence of suitable finite subgroups, but is hopeless for many torsion-free groups like orientable surface groups (as Mess' result above shows) or free groups.
	
	Ashot Minasyan pointed out the following application of virtually splitting the map $\Aut(G) \to \Out(G)$.
	\begin{remark}[Word problem for $\Out(G)$] Let $G$ be a finitely generated group with solvable word problem, and suppose moreover that $\Aut(G)$ is finitely generated. Then it is not hard to deduce a solution of the word problem for $\Aut(G)$ using that of $G$. Pursuing this strategy, one could try to solve the word problem for $\Out(G)$ using a solution to the multiple conjugacy problem in $G$. However, giving a virtual splitting of the map $\Aut(G) \to \Out(G)$ (if it exists) yields a simple solution to the word problem for $\Out(G)$ only relying on a solution of the word problem for $G$.
	\end{remark}

	The paper is organized as follows. Section~\ref{sec:residualfiniteness} is logically independent from the rest of the paper (apart from Theorem~\ref{theorem:Baumslag} and results from Section~\ref{sec:virtually_residually_p}) and serves as a motivation, describing other approaches to show that $\Out(G)$ are residually finite for certain groups $G$. In Section~\ref{sec:nicesubgroup} we describe an elementary strategy to show virtual splitting of the map $\Aut(G) \to \Out(G)$ (see Proposition~\ref{prop:strategy}), namely the existence of an \beep subgroup. We then describe general algebraic and geometric situations giving rise to \beep subgroups (see Propositions~\ref{prop:selfnorm} and~\ref{prop:CAT(0)}) proving Theorems~\ref{thm:main_sample} and~\ref{thm:geometricsplit}. In Section~\ref{sec:applications} we apply the strategy to various classes of discrete groups. We first apply Theorem~\ref{thm:geometricsplit} to all finitely generated Coxeter groups and to some Fuchsian groups with torsion. We then apply our strategy to (non-proper) actions on suitable trees. Finally, we show that Theorem \ref{thm:main_sample} applies to all one-relator groups with torsion.
	
	\section*{Acknowledgements}
	The author is grateful to Pierre-Emmanuel Caprace and Ashot Minasyan for helpful discussions.
	
	\section{Residual finiteness of outer automorphism groups} \label{sec:residualfiniteness}
	A group $G$ is \textbf{residually finite} if the intersection of its finite index subgroups is trivial. In this definition, one may without loss of generality restrict to normal subgroups, or even to characteristic subgroups if $G$ is finitely generated. Other equivalent ways to phrase residual finiteness are the following : 
	\begin{itemize}
		\item for any $1 \neq g\in G$ there is a finite quotient $\varphi: G \to F$ such that $\varphi(g) \neq 1$.
		\item for any finite $X \subset G$ there is a finite quotient $\varphi : G \to F$ such that $\varphi|_X$ is injective.
	\end{itemize}
	Moreover if $G$ is finitely generated, then the quotient $F$ above may be chosen characteristic (i.e. $\ker \varphi$ is $\Aut(G)$-invariant, so that there is an induced map $\varphi^* : \Aut(G) \to \Aut(F)$). This observation leads to the following classical result of Baumslag.
	\begin{theorem}[\cite{Baumslag63}] \label{theorem:Baumslag} Let $G$ be a finitely generated residually finite group. Then $\Aut(G)$ is residually finite.
	\end{theorem}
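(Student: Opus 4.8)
The plan is to separate each nontrivial element of $\Aut(G)$ from the identity in a finite quotient of $\Aut(G)$, exploiting the observation recalled just above: for finitely generated $G$, every finite-index subgroup contains an $\Aut(G)$-invariant (characteristic) one of finite index.

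First I would make that observation precise. Fix a finite-index normal subgroup $M \triangleleft G$ and set $n = [G:M]$. Since $G$ is finitely generated there are only finitely many homomorphisms $G \to S_n$ (each is determined by the images of a finite generating set), and every subgroup of index $n$ arises as a point-stabiliser of one such homomorphism acting on the $n$ cosets; hence $G$ has only finitely many subgroups of index $n$. In particular the $\Aut(G)$-orbit of $M$ is finite, so $N := \bigcap_{\alpha \in \Aut(G)} \alpha(M)$ is a finite-index characteristic subgroup contained in $M$. As $N$ is characteristic, each $\phi \in \Aut(G)$ descends to $\bar\phi \in \Aut(G/N)$, yielding a homomorphism $\rho_N : \Aut(G) \to \Aut(G/N)$ into a finite group.

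Next, given $\phi \neq \Id$ I would pick $g \in G$ with $h := \phi(g)\,g^{-1} \neq 1$. By residual finiteness of $G$ there is a finite-index normal $M$ with $h \notin M$; taking $N \subseteq M$ as above, we still have $h \notin N$. Then $\bar\phi(\bar g)\,\bar g^{-1} = \bar h \neq \bar 1$ in $G/N$, so $\rho_N(\phi) \neq \Id$. Thus $\ker \rho_N$ is a finite-index subgroup of $\Aut(G)$ avoiding $\phi$, which is exactly what residual finiteness of $\Aut(G)$ demands.

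The only substantive point is the finiteness of the $\Aut(G)$-orbit of $M$, and this is precisely where finite generation is indispensable: it guarantees that the characteristic core $N$ has finite index, and hence that $\Aut(G/N)$ is finite. The remaining verifications---that $\rho_N$ is a well-defined homomorphism and that $N \subseteq M$ forces $h \notin N$---are routine.
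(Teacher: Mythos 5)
Your proof is correct and follows exactly the route the paper indicates: it makes precise the observation stated just before the theorem, namely that for finitely generated $G$ any finite-index subgroup contains a finite-index characteristic subgroup $N$, so that automorphisms descend to the finite group $\Aut(G/N)$, and these maps separate nontrivial automorphisms from the identity. This is the standard (Baumslag) argument, and all the supporting steps you supply --- finitely many index-$n$ subgroups via maps to $S_n$, the characteristic core, and the separation of $\phi$ via $h=\phi(g)g^{-1}$ --- are sound.
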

	
	In contrast with automorphism groups, residual finiteness of a finitely generated group $G$ is a priori unrelated to residual finiteness of $\Out(G)$. Indeed, Bumagin and Wise \cite{BumWise05} have shown that \emph{any} finitely presented group appears as $\Out(G)$ of some finitely generated \emph{residually finite} group $G$. In an opposite direction, Minasyan \cite{Minasyan09} showed that \emph{any} countable group can be realized as $\Out(G)$ of a finitely generated \emph{simple} group $G$.
	
	A natural strategy for establishing residual finiteness of $\Out(G)$ in the spirit of Baumslag's result was devised by Grossman. A group is $G$ is called \textbf{conjugacy separable} if given any two non-conjugate elements in $G$ there is a finite quotient in which the images of these elements are not conjugate. An automorphism $\alpha$ of a group $G$ is called \textbf{pointwise inner} if $\alpha(g)$ is conjugate to $g$ for all $g \in G$. Grossman \cite{Grossman74} showed that if $G$ is a finitely generated conjugacy separable group such that all pointwise inner automorphisms of $G$ are inner, then $\Out(G)$ is residually finite. She further successfully applied this criterion to $G = F_n$ and $G = \pi_1(S_g)$ for $S_g$ the orientable surface of genus $g$. Variants of this strategy (possibly replacing conjugacy separability with other properties) play a role to establish residual finiteness of $\Out(G)$ for many finitely generated groups, such as 
	\begin{itemize}
		\item $G$ a residually finite group with more than one end (Minasyan--Osin \cite{MinOsin10})
		\item $G$ a right-angled Artin group (Minasyan \cite{Minasyan12}, Charney--Vogtmann \cite{ChaVogt11})
		\item $G$ a one-ended hyperbolic group, or more generally $G$ a one-ended relatively hyperbolic group with residually finite parabolic subgroups which are not relatively hyperbolic (Levitt--Minasyan \cite{LevMin}).
		\item $G$ an even Coxeter group without subgroups of type $\widetilde{B_2}$ (Caprace--Minasyan \cite{CapMin12})
	\end{itemize}
	
	Note however that establishing conjugacy separability is a very hard problem in general, especially in the presence of torsion. Contrary to residual finiteness, if $H$ is a finite index subgroup of $G$, conjugacy separability $G$ does not imply conjugacy separability of $H$ \cite{ChagZal09, MartMin12}, nor does conjugacy separability of $G$ follow from that of $H$ \cite{Goryaga86}. Caprace and Minasyan were able to show conjugacy separability for the above Coxeter groups, but it remains an open question whether all finitely generated Coxeter groups are conjugacy separable. Our results provide an approach to proving residual finiteness of the outer automorphism group of a Coxeter group without invoking conjugacy separability. 
	
	\begin{remark}[Hyperbolic groups] Let $G$ be a hyperbolic group. From the results cited above \cite{MinOsin10, LevMin} it follows that if $G$ is either one-ended or residually finite, then $\Out(G)$ is residually finite\footnote{Combining these results with the fact that any group $G$ embeds in $\Out(G \ast \ZZ / 2 \ZZ)$ it follows that all hyperbolic groups have a residually finite outer automorphism group if and only if all hyperbolic groups are residually finite, which is a well-known open question.}. As we remarked in the introduction, Mess' result \cite{Mess90} shows that no such general statement can be made about virtually splitting the map $\Aut(G) \to \Out(G)$. This motivates partial results such those in Section~\ref{sec:applications}.
	\end{remark}
	
	\subsection{Virtually residually $p$ groups} \label{sec:virtually_residually_p}
		Let $p$ be a prime. A group $G$ is called \textbf{residually $p$} if for any $1 \neq g\in G$ there is a $p$-finite quotient $\varphi: G \to F$ such that $\varphi(g) \neq 1$. We now record results and observations for later use.
		\begin{remark} \label{rmk:torsion-free} Any torsion element in a residually $p$ group has order a power of $p$. It follows that if a group $G$ is virtually residually $p$ and virtually residually $q$ for distinct primes $p$ and $q$ then $G$ is virtually torsion-free. 
		\end{remark}
		The following $p$ version of Baumslag's result is due to Lubotzky.
		\begin{theorem}[{\cite[Proposition 2]{Lubotzky80}}] Let $G$ be a finitely generated virtually residually $p$ group. Then $\Aut(G)$ is virtually residually $p$.
		\end{theorem}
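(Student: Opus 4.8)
The plan is to mimic Baumslag's argument (Theorem~\ref{theorem:Baumslag}), replacing finite quotients throughout by finite $p$-quotients, with one extra classical input: if $P$ is a finite $p$-group, then the automorphisms of $P$ acting trivially on the Frattini quotient $P/\Phi(P)$ form a normal $p$-subgroup of $\Aut(P)$ (a stability argument, equivalently the Burnside basis theorem), so that $\Aut(P)$ modulo this subgroup embeds in $\GL(d,\ZZ/p\ZZ)$ with $d = \dim_{\ZZ/p\ZZ} P/\Phi(P)$. I would first treat the case where $G$ itself is residually $p$, producing an explicit finite-index residually-$p$ subgroup of $\Aut(G)$, and then bootstrap to the virtual case.

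For $G$ residually $p$, set $D_1 = G^p[G,G]$, so that $G/D_1 \cong (\ZZ/p\ZZ)^d$ is the Frattini quotient, and choose characteristic finite-index subgroups $D_n \subseteq D_1$ with $G/D_n$ a finite $p$-group and $\bigcap_n D_n = 1$ (take $D_n$ to be the intersection of all normal subgroups of $p$-power index at most $p^n$: finite generation makes this a finite intersection, hence of finite index, and residual $p$-ness gives $\bigcap_n D_n = 1$). Since the $D_n$ are characteristic, $\Aut(G)$ acts on each $G/D_n$, giving $\rho_n : \Aut(G) \to \Aut(G/D_n)$, and $A := \ker \rho_1$ has finite index because $\Aut(G/D_1)$ is finite. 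A direct computation gives $\Phi(G/D_n) = D_1/D_n$, hence $(G/D_n)/\Phi(G/D_n) = G/D_1$, so every $\alpha \in A$ induces on $G/D_n$ an automorphism trivial on the Frattini quotient; by the key fact $\rho_n(\alpha)$ lies in a $p$-group, whence $\rho_n(A)$ is a finite $p$-group. Finally, if $\alpha \in A$ is nontrivial, pick $g$ with $\alpha(g)g^{-1} \neq 1$; since $\bigcap_n D_n = 1$ some $\rho_n(\alpha)$ is nontrivial, so the finite $p$-group quotients $A \to \rho_n(A)$ separate $\alpha$ from the identity. Thus $A$ is a finite-index residually-$p$ subgroup of $\Aut(G)$, settling this case.

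For the general case I would first replace $G$ by a characteristic finite-index residually-$p$ subgroup $K$: starting from any finite-index residually-$p$ subgroup, pass to its normal core and then to the intersection of all subgroups of $G$ of that index, which is characteristic, of finite index, and residually $p$ (being contained in a residually-$p$ subgroup). Since $K$ is characteristic, restriction defines a homomorphism $r : \Aut(G) \to \Aut(K)$, and applying the residually-$p$ case to $K$ yields a finite-index residually-$p$ subgroup $A_K < \Aut(K)$; then $B := r^{-1}(A_K)$ has finite index in $\Aut(G)$ and $r(B)$ is residually $p$. The main obstacle is the restriction kernel $\ker r = \{\alpha \in \Aut(G) : \alpha|_K = \Id\}$, which need not be trivial or even finite: for $\alpha \in \ker r$ one checks that $g \mapsto g^{-1}\alpha(g)$ is a cocycle valued in $\Centr_G(K)$, and since $\Centr_G(K)$ contains $\Centr_G(K)\cap K = \Centr(K)$ with finite index, while $\Centr(K) < K$ is residually $p$, the kernel is (residually $p$)-by-finite. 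The heart of the argument is therefore to show that the extension $1 \to \ker r \to B \to r(B) \to 1$ stays residually $p$ after passing to a suitable finite-index subgroup; I expect to handle this by intersecting $B$ with deeper congruence subgroups of $\Aut(G)$ that simultaneously exhaust the $p$-tower coming from $\Centr_G(K)$, so that a single compatible filtration separates points of both $r(B)$ and $\ker r$ by finite $p$-groups. Controlling this kernel, rather than the $p$-group input, which is classical, is where the real work lies.
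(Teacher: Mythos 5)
Note first that the paper gives no proof of this theorem --- it is quoted from \cite[Proposition 2]{Lubotzky80} --- so your argument must stand on its own. Your first half, the case where $G$ itself is residually $p$, is correct and complete: the subgroups $D_n$ are characteristic, of $p$-power index, with trivial intersection; the computation $\Phi(G/D_n)=D_1/D_n$ is right; and the stability fact that $\ker\bigl(\Aut(P)\to\Aut(P/\Phi(P))\bigr)$ is a $p$-group for a finite $p$-group $P$ then exhibits $A=\ker\bigl(\Aut(G)\to\Aut(G/D_1)\bigr)$ as a finite-index residually-$p$ subgroup of $\Aut(G)$. The genuine gap is in the second half, which is the case the theorem actually asserts (the hypothesis is only \emph{virtually} residually $p$): after correctly reducing to a characteristic finite-index residually-$p$ subgroup $K$ and correctly identifying $\ker r$ as the obstruction, you leave the extension $1\to\ker r\to B\to r(B)\to 1$ unresolved (``I expect to handle this\dots''). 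This cannot be waved through as routine, because no abstract extension-closure argument exists: an extension of a residually-$p$ group by a virtually residually-$p$ group need not even be residually finite (Deligne's central extensions $1\to\ZZ\to\widetilde{G}\to\mathrm{Sp}_{2n}(\ZZ)\to 1$ are the standard example), so the separating $p$-quotients must be produced by hand.

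The missing ingredient is concrete, and your own sketch is close to it. Keep the characteristic filtration $K_n\leq K$ from your first half (each $K_n$ is characteristic in $G$, being characteristic in the characteristic subgroup $K$), and take $A:=\ker\bigl(\Aut(G)\to\Aut(K/K_1)\times\Aut(G/K)\bigr)$, a finite-index subgroup of $\Aut(G)$. The key lemma you are missing is: if $N$ is a normal $p$-subgroup of a finite group $P$, then the automorphisms of $P$ acting trivially on both $N$ and $P/N$ form a $p$-group, because $\alpha\mapsto\bigl(g\mapsto g^{-1}\alpha(g)\bigr)$ embeds them into the finite abelian $p$-group $Z^1(P/N,\Centr(N))$ of cocycles valued in the center of $N$ --- this is exactly the cocycle computation you performed for $\ker r$, pushed down into the finite quotients. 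Applying it with $P=G/K_n$ and $N=K/K_n$, and combining with your first half (which makes the image of $A$ in $\Aut(K/K_n)$ a $p$-group, since $A$ restricts into $\ker(\Aut(K)\to\Aut(K/K_1))$), one gets that the image of $A$ in $\Aut(G/K_n)$ is a finite $p$-group for every $n$. These quotients separate the points of $A$: if $\alpha|_K\neq\Id$ this follows from $\bigcap_n K_n=1$; if $\alpha|_K=\Id$ but $\alpha\neq\Id$, then for some $g$ the element $g^{-1}\alpha(g)$ is a nontrivial element of $\Centr(K)\leq K$ (triviality on $G/K$ is what forces it into $K$), and again some $K_n$ misses it. This is precisely the ``single compatible filtration'' you gesture at, but without the cocycle lemma making those images $p$-groups, your text stops short of a proof at the step that carries the actual content of the virtual case.
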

		Finally, we shall make use of linearity through a result of Platonov:
		\begin{theorem}[{\cite{Platonov68}}] Let $G$ be a finitely generated linear group over a field of characteristic $0$. Then $G$ is virtually residually $p$ for all but finitely many primes $p$.
		\end{theorem}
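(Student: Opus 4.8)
The plan is to prove the statement in its concrete form: every finitely generated subgroup $G < \GL_n(K)$, with $K$ a field of characteristic $0$, has, for all but finitely many primes $p$, a finite-index subgroup that is residually $p$. First I would fix a finite generating set of $G$ and let $R \subset K$ be the subring generated over $\ZZ$ by all matrix entries of the generators and of their inverses. Since the inverse of each generator again has entries in $R$, one gets $G < \GL_n(R)$, and $R$ is a finitely generated integral domain of characteristic $0$. Because any ring embedding $R \hookrightarrow \mathcal{O}$ induces an embedding $\GL_n(R) \hookrightarrow \GL_n(\mathcal{O})$, it suffices to embed $R$ into a suitably chosen ring, one prime at a time.

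The engine is the congruence subgroup filtration. Suppose $(\mathcal{O},\mathfrak{m})$ is a complete local ring whose residue field $\mathcal{O}/\mathfrak{m}$ is finite of characteristic $p$. Set $\Gamma_k = \ker(\GL_n(\mathcal{O}) \to \GL_n(\mathcal{O}/\mathfrak{m}^k))$. The map $I + A \mapsto A \bmod \mathfrak{m}^{k+1}$ identifies $\Gamma_k/\Gamma_{k+1}$ with a subgroup of the additive group $M_n(\mathfrak{m}^k/\mathfrak{m}^{k+1})$, a finite $\mathcal{O}/\mathfrak{m}$-vector space and hence a finite elementary abelian $p$-group. Consequently each $\Gamma_1/\Gamma_k$ is a finite $p$-group, and $\bigcap_k \Gamma_k = 1$ since $\mathcal{O}$ is $\mathfrak{m}$-adically separated; thus $\Gamma_1$ is residually $p$. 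Moreover $\Gamma_1$ has finite index $|\GL_n(\mathcal{O}/\mathfrak{m})|$ in $\GL_n(\mathcal{O})$. Therefore, once $R$ embeds into such an $\mathcal{O}$, the subgroup $G \cap \Gamma_1$ is of finite index in $G$ and, being a subgroup of $\Gamma_1$, is residually $p$. Everything thus reduces to producing the embedding.

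The remaining, and decisive, step is: for all but finitely many $p$, $R$ admits an injective ring homomorphism into a complete local ring with finite residue field of characteristic $p$. I would argue via the structure morphism $\pi : \operatorname{Spec} R \to \operatorname{Spec}\ZZ$. As $R$ is a characteristic-$0$ domain, $\ZZ \hookrightarrow R$ makes $\pi$ dominant and of finite type, so by Chevalley's theorem its image is a dense constructible subset of $\operatorname{Spec}\ZZ$ and hence contains $(p)$ for all but finitely many $p$. For each such $p$ the fibre is a nonempty finitely generated $\mathbf{F}_p$-algebra, which by the arithmetic Nullstellensatz has a maximal ideal $\mathfrak{m} \subset R$ with $R/\mathfrak{m}$ finite of characteristic $p$. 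Taking $\mathcal{O} = \widehat{R}_{\mathfrak{m}}$, a complete Noetherian local ring with the same finite residue field, the composite $R \hookrightarrow R_{\mathfrak{m}} \hookrightarrow \widehat{R}_{\mathfrak{m}}$ is injective by Krull's intersection theorem. Feeding this into the previous paragraph yields, for each of these cofinitely many $p$, a finite-index residually-$p$ subgroup of $G$.

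The one genuinely delicate point — the main obstacle — is the uniformity: securing a single finite exceptional set of primes rather than treating each $p$ ad hoc. Handling primes individually is easy but unbounded; for instance $\QQ(i)$ embeds in the unramified $p$-adic integers only for $p \equiv 1 \pmod 4$, and one must otherwise pass to ramified or unramified extensions. Constructibility of the image of $\pi$ is precisely what packages all the local integrality, separability and ramification conditions into one geometric statement, and it is here that finite generation of $R$ and characteristic $0$ are indispensable. A more hands-on alternative, close to Mal'cev's original specialization argument, is to pick a transcendence basis of $\mathrm{Frac}(R)$ over $\QQ$, send it to algebraically independent units in $\ZZ_p$ avoiding the finitely many bad loci (discriminants and denominators), and extend across a primitive element to embed $R$ in the valuation ring of a finite extension of $\QQ_p$; controlling those bad loci uniformly in $p$ is again exactly the crux.
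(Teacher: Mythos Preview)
The paper does not prove this statement; it merely records it as a known theorem of Platonov \cite{Platonov68} for later use, with no argument given. So there is nothing in the paper to compare your proof against.

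That said, your sketch is a correct and standard proof of Platonov's theorem. The reduction to a finitely generated domain $R$, the congruence filtration $\Gamma_k$ in $\GL_n$ of a complete Noetherian local ring with finite residue field, and the identification of $\Gamma_k/\Gamma_{k+1}$ with an elementary abelian $p$-group are all fine; Krull's intersection theorem gives both $\bigcap_k \Gamma_k = 1$ and the injectivity of $R_{\mathfrak m} \hookrightarrow \widehat{R}_{\mathfrak m}$, and $R \to R_{\mathfrak m}$ is injective since $R$ is a domain. Your use of Chevalley's theorem to obtain, for all but finitely many $p$, a closed point of $\operatorname{Spec} R$ lying over $(p)$ with finite residue field is exactly the right way to secure the uniform finite exceptional set, and your closing remarks correctly identify this as the only nontrivial point. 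The alternative Mal'cev-style specialization you mention is also a valid route and is closer to the original arguments in the literature; both approaches are essentially equivalent reformulations of the same commutative-algebra content.
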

		
	\section{Algebraic and geometric criterions} \label{sec:nicesubgroup}
	
	Let $G$ be a group and $H$ be a subgroup. Let $f : \Aut(G) \to \Out(G)$ be the natural quotient map. We denote by
	\begin{itemize} 
	  \item $\Aut_{[H]}(G)$ the subgroup of $\Aut(G)$ which preserves the conjugacy class of $H$;
		\item $\Aut_H(G)$ the subgroup of $\Aut(G)$ which fixes $H$ setwise;
		\item $\Aut_{\underline H}(G)$ the subgroup of $\Aut(G)$ which fixes $H$ pointwise;
		\item $\Out_{[H]}(G) = f(\Aut_{[H]}(G)), \Out_{H}(G) = f(\Aut_{H}(G)), \Out_{\underline H}(G) = f(\Aut_{\underline H}(G))$
	\end{itemize}
	Remark that $\Out_{[H]}(G) = \Out_H(G)$. The group $\Out_{\underline H}(G)$ is sometimes called a \emph{McCool group} \cite{GuirLev12}, or the subgroup of $\Out(G)$ which acts \emph{trivially} on $H$.
	
	We say that $H$ is an \textbf{\beep subgroup} of $G$ (or $\Aut$-splitting subgroup) if the sequence 
	\begin{equation} \label{eqn:beepsubgroup} 1 \to \Aut_{\underline H}(G) \stackrel{f}{\to} \Out(G) \to 1 \tag{\BP}
	\end{equation} is virtually exact, i.e. if $\ker f|_{\Aut_{\underline H}(G)}$ is finite and $f(\Aut_{\underline H}(G)) = \Out_{\underline H}(G)$ has finite index in $\Out(G)$.
	
	\begin{prop} \label{prop:strategy} If $\Aut(G)$ is residually finite, and $H$ is an \beep subgroup of $G$, then the map $f: \Aut(G) \to \Out(G)$ splits virtually. In particular $\Out(G)$ is residually finite.
	\end{prop}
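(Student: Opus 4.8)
The plan is to upgrade the virtual exactness of the sequence~\eqref{eqn:beepsubgroup} into an honest splitting by restricting $f$ to a finite index subgroup on which it becomes injective. Write $K = \Aut_{\underline H}(G)$ and let $N = \ker(f|_K) = K \cap \Inn(G)$. By the definition of an \beep subgroup, $N$ is finite and $f(K) = \Out_{\underline H}(G)$ has finite index in $\Out(G)$. The only obstruction to inverting $f|_K$ is the finite kernel $N$, so the heart of the matter is to eliminate it, which is exactly where the hypothesis that $\Aut(G)$ is residually finite enters.

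First I would use residual finiteness of $\Aut(G)$ to separate the finitely many nontrivial elements of $N$ from the identity: for each $1 \neq n \in N$ choose a finite index subgroup of $\Aut(G)$ not containing $n$, and intersect these finitely many subgroups to obtain a finite index subgroup $L < \Aut(G)$ with $L \cap N = \{1\}$. Setting $K' = K \cap L$, which has finite index in $K$, the restriction $f|_{K'}$ has kernel $K' \cap N \subseteq L \cap N = \{1\}$, so $f|_{K'} : K' \to f(K')$ is an isomorphism onto its image.

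Next I would verify that $\Out'(G) := f(K')$ still has finite index in $\Out(G)$. Since $\Inn(G) \triangleleft \Aut(G)$, the subgroup $N$ is normal in $K$, so $K'N$ is a subgroup of $K$ containing $K'$, hence of finite index in $K$; moreover $f$ induces a bijection between the cosets $K/K'N$ and $f(K)/f(K')$, giving $[f(K) : f(K')] = [K : K'N] \leq [K : K'] < \infty$. As $f(K) = \Out_{\underline H}(G)$ already has finite index in $\Out(G)$, so does $\Out'(G)$. Then $g := (f|_{K'})^{-1} : \Out'(G) \to K' \hookrightarrow \Aut(G)$ is a homomorphism with $f \circ g = \Id$, which is precisely a virtual splitting of $f$.

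Finally, for residual finiteness of $\Out(G)$, note that $f \circ g = \Id$ forces $g$ to be injective, so $\Out'(G)$ embeds in $\Aut(G)$ and is therefore residually finite as a subgroup of a residually finite group. Since $\Out'(G)$ has finite index in $\Out(G)$, a routine argument (pass to the normal core of $\Out'(G)$ in $\Out(G)$, then separate an element from the identity either through the finite quotient by this core or, for elements of the core, through a finite quotient provided by residual finiteness of $\Out'(G)$) shows that $\Out(G)$ itself is residually finite. The one genuinely delicate point in the whole argument is the possible non-injectivity of $f|_K$; once residual finiteness of $\Aut(G)$ is used to trivialize the finite kernel $N$, everything else reduces to bookkeeping with indices.
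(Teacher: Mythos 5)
Your proof is correct and follows essentially the same route as the paper: both use residual finiteness of $\Aut(G)$ to pass to a finite index subgroup of $\Aut_{\underline H}(G)$ meeting the finite kernel $N = \ker(f|_{\Aut_{\underline H}(G)})$ trivially (the paper via a finite quotient injective on $N$, you via intersecting finitely many separating subgroups — the same idea), and then invert $f$ on that subgroup. Your additional bookkeeping with $K'N$ and the explicit argument that residual finiteness passes from the finite index subgroup $\Out'(G)$ to $\Out(G)$ are details the paper leaves implicit, but there is no substantive difference.
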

	\begin{proof}
		Since $\Aut(G)$ is residually finite, so is $\Aut_{\underline H}(G)$. Hence we can find a finite quotient $\psi : \Aut_{\underline H}(G) \to F$ such that $\psi$ is injective on the finite subgroup $\ker f|_{\Aut_{\underline H}(G)}$. Thus $\ker \psi$ is a finite index subgroup of $\Aut_{\underline H}(G)$, $f$ is injective on $\ker \psi$ and $f(\ker \psi)$ has finite index in $\Out(G)$. The desired virtual splitting is given by $O = f(\ker \psi)$ and $g = f^{-1}|_O$.
	\end{proof}
	
	\begin{example}[Trivial examples] Let $G$ be a group. Then $G$ is an \beep subgroup of $G$ if and only if $\Out(G)$ is finite, and $\{1\}$ is an \beep subgroup of $G$ if and only if $\Inn(G)$ is finite, i.e. $\Centr(G)$ has finite index in $G$. In the latter case, if $G$ is finitely generated, then it is residually finite, hence so is $\Aut(G)$ by Theorem~\ref{theorem:Baumslag}, so that $\Aut(G)$ is commensurable with $\Out(G)$.
	\end{example}
	
	\begin{remark}\label{remark:trivialcenter} If $G$ admits an \beep subgroup with trivial center, then the map $f|_{\Aut_{\underline H}(G)}$ is injective so that the map $f : \Aut(G) \to \Out(G)$ splits virtually. This does not imply in general that $\Out(G)$ is residually finite.
	\end{remark}
	
	We now restate the \beep subgroup condition.
	\begin{prop} \label{prop:restatement} Let $G$ be a group, and $H$ be a subgroup. Then $H$ is an \beep subgroup of $G$ if and only if the following three conditions hold:
	\begin{myenum}{\BP}
		\item $\Centr_G(H) / \Centr(G)$ is finite. \label{cond:finitecentralizer}
		\item A finite index subgroup of $\Aut(G)$ preserves the conjugacy class of $H$. \label{cond:preserve_conj_class}
		\item $\Out_{\underline H}(G)$ has finite index in $\Out_{H}(G)$. \label{cond:H_rigid}
	\end{myenum}
	\end{prop}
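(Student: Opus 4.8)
The plan is to verify the two defining requirements of an \beep subgroup separately---finiteness of $\ker f|_{\Aut_{\underline H}(G)}$ and finiteness of the index $[\Out(G):\Out_{\underline H}(G)]$---and to match each with the stated conditions. Since $\ker f = \Inn(G)$, I first analyze $\ker f|_{\Aut_{\underline H}(G)} = \Aut_{\underline H}(G) \cap \Inn(G)$. An inner automorphism $\iota_g : x \mapsto gxg^{-1}$ fixes $H$ pointwise exactly when $g \in \Centr_G(H)$, and the canonical surjection $G \to \Inn(G)$ has kernel $\Centr(G)$, which is contained in $\Centr_G(H)$. Hence $\ker f|_{\Aut_{\underline H}(G)} \cong \Centr_G(H)/\Centr(G)$, and this kernel is finite if and only if \ref{cond:finitecentralizer} holds.

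It remains to show that $\Out_{\underline H}(G)$ has finite index in $\Out(G)$ if and only if both \ref{cond:preserve_conj_class} and \ref{cond:H_rigid} hold. I would organize this around the chain of subgroups
\[ \Out_{\underline H}(G) \leq \Out_{H}(G) = \Out_{[H]}(G) \leq \Out(G), \]
where the equality is the one already recorded in the text. Multiplicativity of the index along this chain reduces the claim to two independent statements: that $\Out_{\underline H}(G)$ has finite index in $\Out_{H}(G)$---which is precisely \ref{cond:H_rigid}---and that $\Out_{[H]}(G)$ has finite index in $\Out(G)$.

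The remaining point is to identify the latter with \ref{cond:preserve_conj_class}. Because every inner automorphism sends $H$ to a conjugate of $H$, we have $\Inn(G) \subseteq \Aut_{[H]}(G)$, so $f^{-1}(\Out_{[H]}(G)) = \Aut_{[H]}(G)$ and therefore $[\Out(G):\Out_{[H]}(G)] = [\Aut(G):\Aut_{[H]}(G)]$. Thus $\Out_{[H]}(G)$ has finite index in $\Out(G)$ if and only if $\Aut_{[H]}(G)$ has finite index in $\Aut(G)$, which is exactly the assertion that some finite index subgroup of $\Aut(G)$ preserves the conjugacy class of $H$, i.e.\ \ref{cond:preserve_conj_class}. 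Assembling the three equivalences yields the proposition; note that the condition $f(\Aut_{\underline H}(G)) = \Out_{\underline H}(G)$ appearing in the definition is automatic, being the very definition of $\Out_{\underline H}(G)$.

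I do not expect a genuine obstacle here: the statement is a reformulation, and the content is the bookkeeping with the nested subgroups $\Aut_{\underline H}(G) \leq \Aut_H(G) \leq \Aut_{[H]}(G)$ and their images in $\Out(G)$. The only points demanding care are the inclusion $\Centr(G) \subseteq \Centr_G(H)$ (so that the kernel is genuinely the quotient $\Centr_G(H)/\Centr(G)$ rather than a more complicated double-coset expression) and the use of $\Inn(G) \subseteq \Aut_{[H]}(G)$ to transport indices between $\Aut(G)$ and $\Out(G)$ without loss.
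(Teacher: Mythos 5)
Your proof is correct and takes essentially the same approach as the paper's: the same identification $\ker f|_{\Aut_{\underline H}(G)} = \Inn(G) \cap \Aut_{\underline H}(G) \cong \Centr_G(H)/\Centr(G)$ for the kernel condition, and the same chain $\Out_{\underline H}(G) \leq \Out_H(G) = \Out_{[H]}(G) \leq \Out(G)$ to split the finite-index condition into the two remaining items. The additional detail you supply---using $\Inn(G) \subseteq \Aut_{[H]}(G)$ to get $[\Out(G):\Out_{[H]}(G)] = [\Aut(G):\Aut_{[H]}(G)]$---is a correct elaboration of what the paper dismisses as a ``restatement'' of the conjugacy-class condition.
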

	\begin{proof}
		First, observe that $\ker f|_{\Aut_{\underline H}(G)} = \Inn(G) \cap \Aut_{\underline H}(G) \cong \Centr_G(H) / \Centr(G)$. Thus $\ker f|_{\Aut_{\underline H}(G)}$ is finite if and only if $\Centr(G)$ has finite index in $\Centr_G(H)$, i.e.~\ref{cond:finitecentralizer} holds.
		
		Next, recall that we have $\Out_{\underline H}(G) < \Out_{H}(G) = \Out_{[H]}(G) < \Out(G)$. Thus $\Out_{\underline H}(G)$ has finite index in $\Out(G)$ if and only if condition~\ref{cond:H_rigid} holds and $\Out_{[H]}(G)$ has finite index in $\Out(G)$, which is a restatement of~\ref{cond:preserve_conj_class}.
	\end{proof}
	
	
	\subsection{Self-normalized subgroups and the restriction homomorphism} Suppose that the group $H$ is self-normalized in $G$. We let $\lambda : \Aut_{H}(G) \to \Aut(H)$ be the natural restriction map. Remark that $\Aut_{\underline H}(G) = \ker \lambda$. Since $H$ is self-normalized in $G$, the map $\lambda$ induces a homomorphism $\varphi : \Out_{H}(G) \to \Out(H)$ whose kernel is exactly $\Out_{\underline H}(G)$. Thus, in this particular setting conditions~\ref{cond:finitecentralizer} and~\ref{cond:H_rigid} admit further restatements.
	
	\begin{prop} \label{prop:selfnorm} Let $G$ be a group, and $H$ be a self-normalized subgroup of $G$. Then $H$ is an \beep subgroup of $G$ if and only if the following three conditions hold:
	\begin{myenump}{\BP} \setcounter{enumi}{0}
		\item $\Centr(H)/\Centr(G)$ is finite. \label{cond:H_selfnorm_finitecenter}
	\end{myenump}
	\begin{myenum}{\BP} \setcounter{enumi}{1}
		\item A finite index subgroup of $\Aut(G)$ preserves the conjugacy class of $H$.
	\end{myenum}
	\begin{myenump}{\BP} \setcounter{enumi}{2}
		\item The image of $\varphi : \Out_{H}(G) \to \Out(H)$ is finite. \label{cond:H_selfnorm_rigid} \qed
	\end{myenump}
	\end{prop}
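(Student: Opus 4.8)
The plan is to deduce this directly from Proposition~\ref{prop:restatement}, which already characterizes \beep subgroups (for arbitrary $H$) by the three conditions~\ref{cond:finitecentralizer}, \ref{cond:preserve_conj_class} and~\ref{cond:H_rigid}. Since condition~\ref{cond:preserve_conj_class} is carried over verbatim, it suffices to check that, under the self-normalization hypothesis, condition~\ref{cond:finitecentralizer} is equivalent to~\ref{cond:H_selfnorm_finitecenter} and condition~\ref{cond:H_rigid} is equivalent to~\ref{cond:H_selfnorm_rigid}.

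First I would establish the identity $\Centr_G(H) = \Centr(H)$. Indeed, any element of $\Centr_G(H)$ centralizes $H$ and in particular normalizes it, so $\Centr_G(H) \subseteq \Norm_G(H) = H$ by self-normalization. Thus $\Centr_G(H)$ consists precisely of those elements of $H$ commuting with all of $H$, that is, $\Centr_G(H) = \Centr(H)$ (and in particular $\Centr(G) \subseteq \Centr_G(H) = \Centr(H)$, so the quotient in~\ref{cond:H_selfnorm_finitecenter} makes sense). Consequently~\ref{cond:finitecentralizer} and~\ref{cond:H_selfnorm_finitecenter} are literally the same condition.

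Next, to compare~\ref{cond:H_rigid} and~\ref{cond:H_selfnorm_rigid}, I would invoke the induced homomorphism $\varphi : \Out_{H}(G) \to \Out(H)$ described above, whose kernel is exactly $\Out_{\underline H}(G)$ (this is where self-normalization enters a second time, guaranteeing that $\lambda$ descends to a well-defined $\varphi$ with this kernel). By the first isomorphism theorem one has $\Out_{H}(G) / \Out_{\underline H}(G) \cong \operatorname{Im} \varphi$, so $\Out_{\underline H}(G)$ has finite index in $\Out_{H}(G)$ if and only if $\operatorname{Im}\varphi$ is finite; that is,~\ref{cond:H_rigid} holds if and only if~\ref{cond:H_selfnorm_rigid} holds.

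Combining these two equivalences with the unchanged condition~\ref{cond:preserve_conj_class} yields the claim. I do not expect a serious obstacle here: the argument is a formal consequence of Proposition~\ref{prop:restatement} together with the two self-normalization consequences. The only points requiring genuine care are the inclusion $\Centr_G(H) \le \Norm_G(H) = H$ forcing $\Centr_G(H) = \Centr(H)$, and the fact --- already recorded in the discussion preceding the statement --- that $\varphi$ is well-defined exactly because $H$ is self-normalized and has kernel $\Out_{\underline H}(G)$.
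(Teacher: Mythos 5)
Your proposal is correct and follows essentially the same route as the paper: Proposition~\ref{prop:selfnorm} is stated with a \qed precisely because it is meant to follow from Proposition~\ref{prop:restatement} together with the two observations you make, namely that self-normalization forces $\Centr_G(H) = \Centr(H)$ (turning \ref{cond:finitecentralizer} into \ref{cond:H_selfnorm_finitecenter}) and that $\ker\varphi = \Out_{\underline H}(G)$ identifies $\Out_H(G)/\Out_{\underline H}(G)$ with $\operatorname{Im}\varphi$ (turning \ref{cond:H_rigid} into \ref{cond:H_selfnorm_rigid}). Your write-up simply makes explicit the reasoning the paper leaves implicit in the paragraph preceding the statement.
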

	
	\begin{remark} Even if $H$ is not self-normalized in $G$, condition~\ref{cond:H_rigid} holds as soon as $\Out(H)$ is finite. Indeed, letting $N = \Aut_H(G) \cap \Inn(G)$ we have 
	\begin{align*} [\Out_H(G):\Out_{\underline H}(G)] &= [\Aut_H(G):\langle \Aut_{\underline H}(G), N\rangle]
	\\ &= [\lambda(\Aut_H(G)) : \lambda(N)] 
	\\ &\leq [\Aut(H) : \Inn(H)] = |\Out(H)|
	\end{align*}
	\end{remark}
	
	We single out an important particular case of Proposition~\ref{prop:restatement}.
	\begin{corollary} \label{cor:selfnormgroup} Let $G$ be a group, and let $H$ be a subgroup of $G$ such that:
		\begin{enumerate} 
			\item there are finitely many conjugacy classes of subgroups of $G$ isomorphic to $H$.
			\item $N_G(H)$ is finite.
		\end{enumerate}
		Then $H$ is an \beep subgroup of $G$.
	\end{corollary}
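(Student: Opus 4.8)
The plan is to verify the three conditions~\ref{cond:finitecentralizer}, \ref{cond:preserve_conj_class} and~\ref{cond:H_rigid} of Proposition~\ref{prop:restatement}. The key starting observation is that $H \leq N_G(H)$, so hypothesis~(2) forces $H$ to be a \emph{finite} subgroup; this finiteness of $H$ is what will ultimately power two of the three conditions.

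For~\ref{cond:finitecentralizer}, I would note that $\Centr_G(H) \leq N_G(H)$, so $\Centr_G(H)$ is finite by hypothesis~(2); in particular its quotient $\Centr_G(H)/\Centr(G)$ is finite. For~\ref{cond:preserve_conj_class}, the idea is to let $\Aut(G)$ act on the set $\mathcal{C}$ of conjugacy classes of subgroups of $G$ isomorphic to $H$. Any $\alpha \in \Aut(G)$ sends a subgroup isomorphic to $H$ to another such subgroup, and sends conjugate subgroups to conjugate subgroups (from $K' = gKg^{-1}$ one reads off $\alpha(K') = \alpha(g)\alpha(K)\alpha(g)^{-1}$), so this action on $\mathcal{C}$ is well-defined. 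By hypothesis~(1) the set $\mathcal{C}$ is finite, the stabilizer of the class $[H]$ is precisely $\Aut_{[H]}(G)$, and a point stabilizer of an action on a finite set has finite index; hence a finite index subgroup of $\Aut(G)$ preserves the conjugacy class of $H$.

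For the last condition~\ref{cond:H_rigid}, since $H$ is finite both $\Aut(H)$ and $\Out(H) = \Aut(H)/\Inn(H)$ are finite, so~\ref{cond:H_rigid} holds directly by the Remark preceding this Corollary. With all three conditions in hand, Proposition~\ref{prop:restatement} gives that $H$ is an \beep subgroup, completing the proof. I expect no genuine obstacle here, as the argument is a direct unwinding of Proposition~\ref{prop:restatement} once one records that $N_G(H)$ finite forces $H$ finite; the only point requiring a moment's care is checking that the $\Aut(G)$-action in~\ref{cond:preserve_conj_class} descends to conjugacy classes, i.e.\ that automorphisms respect conjugacy of subgroups.
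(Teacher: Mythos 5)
Your proof is correct and follows exactly the route the paper intends: the corollary is stated as a particular case of Proposition~\ref{prop:restatement}, with \ref{cond:finitecentralizer} from $\Centr_G(H)\leq N_G(H)$, \ref{cond:preserve_conj_class} from the finite $\Aut(G)$-action on conjugacy classes of subgroups isomorphic to $H$, and \ref{cond:H_rigid} from the preceding Remark once one notes $H\leq N_G(H)$ is finite, hence $\Out(H)$ is finite. Nothing to add.
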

	Condition (2) above forces $H$ to be finite. However, the conclusion remains true if condition (2) is replaced by requiring that $\Out(H)$ and $\Centr_G(H)/\Centr(G)$ are finite. Theorem~\ref{thm:main_sample} is a direct consequence Theorem~\ref{theorem:Baumslag}, Proposition~\ref{prop:strategy} and Corollary~\ref{cor:selfnormgroup}.
	
\subsection{\Beep subgroups from actions on $\CAT(0)$ spaces}
		
	Throughout this section, we let $X$ be a complete $\CAT(0)$ space and we let $G$ be a group acting by isometries on $X$. For $H < G$ we let $\Fix H$ be the set of fixed points $\{x \in X \mid hx=x \forall h \in H\}$. A subgroup $H< G$ is called \textbf{elliptic} if $\Fix H \neq \emptyset$.
	
	The following lemma provides a geometric criterion to find self-normalized subgroups of groups acting on $\CAT(0)$ spaces.
	\begin{lemma} \label{lemma:maxelliptic} Let $G$ be a group acting by isometries on a complete $\CAT(0)$ space $X$.
		\begin{enumerate} 
			\item Any elliptic subgroup $H_0$ with $\Fix H_0$ bounded is contained in a maximal elliptic subgroup $H$.
			\item If $H < G$ is a maximal elliptic subgroup with $\Fix H$ bounded then $\Norm_G(H) = H$.
		\end{enumerate}
	\end{lemma}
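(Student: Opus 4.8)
The plan is to exploit the fact that, in a complete $\CAT(0)$ space, every nonempty bounded closed convex set $C$ has a unique \emph{circumcenter}, the center of its unique smallest enclosing ball, and that this circumcenter is fixed by any isometry preserving $C$ (the Bruhat--Tits fixed point argument). I will also use that for any isometry $\gamma$ the fixed set $\Fix\langle\gamma\rangle = \{x : \gamma x = x\}$ is closed and convex, being the preimage of $0$ under the continuous convex function $x \mapsto d(x,\gamma x)$; hence for any $H < G$ the set $\Fix H = \bigcap_{h\in H}\Fix\langle h\rangle$ is closed and convex, and it is contained in $\Fix H_0$ whenever $H_0 < H$.

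For part (1), I would apply Zorn's lemma to the poset of elliptic subgroups of $G$ containing $H_0$, ordered by inclusion. A chain $(H_i)$ has union $H_\infty = \bigcup_i H_i$, which is a subgroup, and $\Fix H_\infty = \bigcap_i \Fix H_i$; since every $\Fix H_i$ is contained in the bounded set $\Fix H_0$, the whole point is to show that this intersection of a chain of nonempty bounded closed convex sets is nonempty, which then produces an upper bound $H_\infty$ and hence a maximal elliptic subgroup $H \supseteq H_0$ (automatically maximal among all elliptic subgroups, since any elliptic overgroup of $H$ also contains $H_0$). This nonemptiness is the analytic heart, and the main obstacle. I would prove it by letting $r = \inf_i \mathrm{rad}(\Fix H_i)$ be the infimal circumradius and estimating, via the $\CAT(0)$ inequality applied to the midpoint of two circumcenters, that the circumcenters of those $\Fix H_i$ whose circumradius is close to $r$ lie in clusters of diameter tending to $0$; completeness then yields a limit point $c$, and a short case analysis (according to whether $\mathrm{rad}(\Fix H_i)$ equals $r$ or strictly exceeds it) shows $c \in \Fix H_i$ for every $i$, so that $c \in \Fix H_\infty$ and $H_\infty$ is elliptic.

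For part (2), fix $g \in \Norm_G(H)$ and set $K = \langle H, g\rangle$, so that $H \trianglelefteq K$. The aim is to show that $K$ is elliptic; maximality of $H$ then forces $K = H$, whence $g \in H$ and $\Norm_G(H) = H$. Because $H$ is normal in $K$, each $k \in K$ satisfies $k \cdot \Fix H = \Fix(kHk^{-1}) = \Fix H$, so $K$ preserves the nonempty bounded closed convex set $\Fix H$ setwise. Its circumcenter is therefore fixed by every element of $K$, giving a point of $\Fix K$ and showing $K$ is elliptic. This step is routine once the circumcenter machinery from part (1) is in place; the only thing to check carefully is the identity $k \cdot \Fix H = \Fix(kHk^{-1})$, which is immediate from $k$ being an isometry.
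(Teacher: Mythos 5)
Your proof is correct, and its overall skeleton matches the paper's: Zorn's lemma applied to elliptic subgroups containing $H_0$ for part (1), and for part (2) the observation that $\Norm_G(H)$ stabilizes the bounded closed convex set $\Fix H$ and therefore fixes its circumcenter (Bruhat--Tits), after which maximality forces $\Norm_G(H)=H$ — your detour through $K=\langle H,g\rangle$ is the same argument run elementwise. The genuine difference is in the analytic heart of part (1): where you prove by hand that a chain of nonempty bounded closed convex sets has nonempty intersection, the paper simply cites Monod's theorem that any \emph{filtering} family of nonempty bounded closed convex subsets of a complete $\CAT(0)$ space has nonempty intersection, the finite intersection property being trivial for a nested chain. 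Your circumcenter argument is sound: for nested sets $C_\beta\subseteq C_\alpha$ with circumcenters $c_\alpha,c_\beta$ and circumradii $r_\alpha,r_\beta$, the midpoint estimate gives $d(c_\alpha,c_\beta)^2\leq 2(r_\alpha^2-r_\beta^2)$, so as the radii descend to their infimum the circumcenters form a Cauchy net; note, however, that your final ``case analysis'' is unnecessary — since each circumcenter lies in its own (closed convex) set, for fixed $\alpha$ the centers indexed by the cofinal subchain below $C_\alpha$ all lie in $C_\alpha$, and closedness alone puts the limit in every $C_\alpha$. What the two routes buy: yours is self-contained and elementary, using nothing beyond the $\CAT(0)$ midpoint inequality and completeness; the paper's citation is shorter and invokes a strictly more general fact (arbitrary filtering families, not just chains), which is the natural statement to have on hand in this context.
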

	\begin{proof}
		We show that the union $H=\cup_\alpha H_\alpha$ of a chain of elliptic subgroups $H_\alpha$ containing $H_0$ is elliptic. Indeed, each $\Fix H_\alpha$ is nonempty, closed, convex and bounded, and if $H_\alpha < H_\beta$ then $\Fix H_\alpha \supset \Fix H_\beta$. Since any finite subfamily of $\{\Fix H_\alpha\}_\alpha$ has nonempty intersection, it follows from \cite[Theorem 14]{Monod06} that $\Fix H = \cap_\alpha \Fix H_\alpha$ is nonempty. The first assertion now follows from Zorn's Lemma.
		
		In order to prove the second assertion, we observe that $\Norm_G(H)$ stabilizes the bounded set $\Fix H$, so that $\Norm_G(H)$ fixes a point, namely the center of $\Fix H$ \cite[Prop. II.2.7]{BridHaef99}. It now follows from the maximality assumption that $\Norm_G(H)=H$.
	\end{proof}
	
	We thus get the following geometric version of Proposition~\ref{prop:selfnorm}.
	\begin{prop} \label{prop:CAT(0)}Let $G$ be a group acting by isometries on a complete $\CAT(0)$ space $X$, and let $H$ be a maximal elliptic subgroup with $\Fix H$ bounded. Suppose that:
	\begin{enumerate}
		\item $\Centr(H)/\Centr(G)$ is finite;
		\item $\Aut(G)$ preserves the family of elliptic subgroups of $G$;
		\item there are finitely many conjugacy classes of maximal elliptic subgroups;
		\item the image of $\varphi : \Out_H(G) \to \Out(H)$ is finite.
	\end{enumerate}
	Then $H$ is an \beep subgroup of $G$.
	\end{prop}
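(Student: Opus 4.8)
The plan is to deduce Proposition~\ref{prop:CAT(0)} directly from Proposition~\ref{prop:selfnorm} by verifying that each of the four geometric hypotheses implies the corresponding algebraic condition in the self-normalized setting. The first observation I would make is that Lemma~\ref{lemma:maxelliptic}(2) applies: since $H$ is a maximal elliptic subgroup with $\Fix H$ bounded, we have $\Norm_G(H) = H$, so $H$ is self-normalized in $G$. This is precisely what licenses the use of Proposition~\ref{prop:selfnorm} rather than the more general Proposition~\ref{prop:restatement}, and it is what gives us the restriction-induced homomorphism $\varphi : \Out_H(G) \to \Out(H)$ whose kernel is $\Out_{\underline H}(G)$.

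Granting self-normalization, the verification is essentially a dictionary. Condition~(1) of the proposition is literally condition~\ref{cond:H_selfnorm_finitecenter} of Proposition~\ref{prop:selfnorm}, and condition~(4) is literally condition~\ref{cond:H_selfnorm_rigid}, so those require no work beyond noting the match. The substantive step is to show that hypotheses~(2) and~(3) together yield condition~(\BP2), namely that a finite index subgroup of $\Aut(G)$ preserves the conjugacy class of $H$. Here I would argue as follows: by~(2), every $\alpha \in \Aut(G)$ sends elliptic subgroups to elliptic subgroups. Since $H$ is a \emph{maximal} elliptic subgroup, and an automorphism must send maximal elliptic subgroups to maximal elliptic subgroups (it is an order-preserving bijection of the poset of subgroups preserving ellipticity, hence preserves maximality within that family), each $\alpha$ carries $H$ to some maximal elliptic subgroup $\alpha(H)$. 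By~(3) there are only finitely many conjugacy classes of such subgroups, so $\Aut(G)$ permutes this finite set of conjugacy classes. The stabilizer of the class $[H]$ under this action is exactly $\Aut_{[H]}(G)$, and it has index at most the number of conjugacy classes of maximal elliptic subgroups, hence finite index. This gives~(\BP2).

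With all three conditions of Proposition~\ref{prop:selfnorm} verified, that proposition concludes that $H$ is an \beep subgroup of $G$, which is exactly the statement to be proved. The main obstacle I anticipate is the claim in the preceding paragraph that an automorphism preserving the family of elliptic subgroups must preserve \emph{maximal} elliptic subgroups: one must be slightly careful that maximality is an intrinsic property of the lattice of elliptic subgroups and not tied to a specific action. Since $\alpha$ restricts to an isomorphism of the inclusion-ordered set of elliptic subgroups onto itself (by hypothesis~(2) applied to both $\alpha$ and $\alpha^{-1}$), it preserves the maximal elements of that ordered set, so $\alpha(H)$ is again a maximal elliptic subgroup; this is the point to state cleanly. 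Everything else is a routine transcription, so the proof can be kept to a few lines: invoke Lemma~\ref{lemma:maxelliptic}(2) for self-normalization, match conditions~(1) and~(4), derive~(\BP2) from~(2) and~(3) via the finite permutation action on conjugacy classes, and cite Proposition~\ref{prop:selfnorm}.
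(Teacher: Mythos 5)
Your proof is correct and follows exactly the same route as the paper's: invoke Lemma~\ref{lemma:maxelliptic} for self-normalization, note that hypotheses (1) and (4) are conditions~\ref{cond:H_selfnorm_finitecenter} and~\ref{cond:H_selfnorm_rigid} verbatim, and derive condition~\ref{cond:preserve_conj_class} from the fact that $\Aut(G)$ permutes the finitely many conjugacy classes of maximal elliptic subgroups. Your added care about why automorphisms preserve \emph{maximality} (via the poset bijection, using hypothesis (2) for both $\alpha$ and $\alpha^{-1}$) is a detail the paper states without justification, so it is a welcome clarification rather than a deviation.
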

	\begin{proof}
		By Lemma~\ref{lemma:maxelliptic} the subgroup $H$ is self-normalized in $G$. Thus by Proposition~\ref{prop:selfnorm} we only need to check condition~\ref{cond:preserve_conj_class}. Since $\Aut(G)$ preserves elliptic subgroups, it also preserves maximal elliptic subgroups. By hypothesis there are only finitely many conjugacy classes of maximal elliptic subgroups, so that a finite index subgroup of $\Aut(G)$ preserves the conjugacy class of $H$.
	\end{proof}
	
	Restricting to the case of properly discontinuous and cocompact actions we get
	\begin{corollary} \label{cor:geometricCAT0} Let $G$ be a group acting properly discontinuously and cocompactly by isometries on a complete $\CAT(0)$ space $X$, and let $H_0$ be a finite subgroup whose set of fixed points is bounded. Then any maximal finite subgroup $H$ containing $H_0$ is an \beep subgroup.
	\end{corollary}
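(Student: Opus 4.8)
The plan is to deduce this directly from Proposition~\ref{prop:CAT(0)} by verifying its four hypotheses for such an $H$. The essential preliminary observation is that, because the action is properly discontinuous, the elliptic subgroups of $G$ are precisely the finite subgroups. On the one hand, point stabilizers are finite by proper discontinuity, so every elliptic subgroup is finite; on the other hand, a finite subgroup $F < G$ has a bounded (indeed finite) orbit, and the circumcenter of that orbit is fixed by $F$ (the Bruhat--Tits fixed point theorem, cf.~\cite[Prop. II.2.7]{BridHaef99}), so every finite subgroup is elliptic. Consequently a maximal finite subgroup is the same thing as a maximal elliptic subgroup, and the hypotheses of Proposition~\ref{prop:CAT(0)} concern objects we already understand. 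In particular such an $H$ exists, since $H_0$ is elliptic with bounded fixed-point set and Lemma~\ref{lemma:maxelliptic}(1) places it in a maximal elliptic (hence maximal finite) subgroup.

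First I would record that any maximal finite subgroup $H$ containing $H_0$ has bounded fixed-point set: since $H_0 < H$ we have $\Fix H \subseteq \Fix H_0$, and $\Fix H_0$ is bounded by hypothesis. Thus $H$ is a maximal elliptic subgroup with $\Fix H$ bounded, as Proposition~\ref{prop:CAT(0)} requires. Three of the four conditions are then immediate from the finiteness of $H$. Condition (1) holds because $\Centr(H)$ is a subgroup of the finite group $H$, hence finite, so that $\Centr(H)/\Centr(G)$ is finite. Condition (2) holds because every automorphism of $G$ carries a finite subgroup to a finite subgroup of the same order, hence preserves the family of finite subgroups, which we have identified with the elliptic subgroups. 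Condition (4) holds because $\Out(H)$ is finite ($H$ being finite), so the image of $\varphi$ is finite a fortiori.

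The one substantive input is condition (3), the finiteness of the number of conjugacy classes of maximal elliptic subgroups. Here I would invoke the standard fact that a group acting properly discontinuously and cocompactly by isometries on a complete $\CAT(0)$ space has only finitely many conjugacy classes of finite subgroups (see \cite{BridHaef99}); since maximal elliptic subgroups are finite, there are a fortiori finitely many conjugacy classes of them. With all four hypotheses verified, Proposition~\ref{prop:CAT(0)} yields that $H$ is an \beep subgroup. The main (and really only) point demanding more than a formal argument is this finiteness of conjugacy classes of finite subgroups, which ultimately rests on cocompactness; everything else collapses to the identification "elliptic $=$ finite" valid in the proper setting.
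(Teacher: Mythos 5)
Your proof is correct and follows essentially the same route as the paper: identify elliptic subgroups with finite subgroups via proper discontinuity and the Bruhat--Tits fixed point theorem, invoke the finiteness of conjugacy classes of finite subgroups for proper cocompact actions, and verify the hypotheses of Proposition~\ref{prop:CAT(0)}. The paper's proof simply declares these verifications ``straightforward,'' whereas you spell them out; no substantive difference.
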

	\begin{proof} Since $G$ acts properly discontinuously and cocompactly on $X$, elliptic subgroups coincide with finite subgroups, and there are only finitely many conjugacy classes of finite subgroup. In particular there exists a maximal finite finite subgroup $H$ containing $H_0$. It is now straightforward to check all hypotheses of Proposition~\ref{prop:CAT(0)}.
	\end{proof}
	Theorem~\ref{thm:geometricsplit} follows from Proposition~\ref{prop:strategy} and Corollary~\ref{cor:geometricCAT0}.
	
	\begin{remark} 
		If a complete $\CAT(0)$ space $X$ is proper, then a subset $Y \subset X$ is unbounded if and only if it contains a geodesic ray. In that case an elliptic subgroup $H$ has a bounded set of fixed point if and only if $H$ does not fix a point of the visual boundary $\partial X$.
	
		If a group $G$ acts properly discontinuously and cocompactly on a complete $\CAT(0)$ space, then $X$ is proper and the normalizer of a finite subgroup $H$ acts cocompactly on $\Fix H$ (see Remark 2 following Theorem 3.2 in \cite{Ruane01}). Thus for $H$ a finite subgroup of $G$ the following are equivalent:
	\begin{enumerate}
		\item $\Norm_G(H)$ is finite
		\item $\Fix H$ is bounded
		\item $H$ fixes no point of the $\CAT(0)$ boundary $\partial X$.
	\end{enumerate}
	\end{remark}
	
	\section{Applications} \label{sec:applications}
	
	We now turn to applications of the results of the preceding section. Recall that if the map $\Aut(G) \to \Out(G)$ splits virtually for a finitely generated residually finite group $G$, then necessarily $\Out(G)$ is residually finite. Thus the results below also provide an elementary strategy to show that $\Out(G)$ is residually finite for many groups. This is the only strategy currently known that applies to all finitely generated Coxeter groups (see Section~\ref{sec:residualfiniteness} for other strategies).
	
	\subsection{Coxeter groups} We prove Theorem~\ref{thm:Coxeter} from the Introduction. Let $G$ be a finitely generated Coxeter group. Since $G$ is $\RR$-linear (through the geometric representation \cite[Appendix D]{Davis08}) and finitely generated, it follows from Malcev's theorem that $G$ is residually finite. We refer the reader unfamiliar with Coxeter groups and their Davis complex to the book \cite{Davis08}.
	
	Moreover, $G$ acts properly and cocompactly on its Davis complex $X$, which is a $\CAT(0)$ piecewise Euclidean cell complex \cite{Moussong88}. Maximal cells of this complex are in bijective correspondence with maximal finite subgroups of $G$, which are themselves (finite) Coxeter groups. More precisely, given a maximal finite group $H$ the Euclidean metric on the corresponding maximal cell $Y$ is modeled on the convex hull of an orbit of a faithful representation of $H$ as a Euclidean reflection group. Moreover, the only fixed point under this representation is the origin, i.e. the center of the cell. Since the only fixed point in $Y$ of $H$ lies in the interior of $Y$, and since fixed point sets are convex, it follows that $H$ fixes a unique point in $X$. We can therefore apply Theorem~\ref{thm:geometricsplit} to the maximal finite subgroup $H$, so that the map $\Aut(G) \to \Out(G)$ splits virtually.
	
	We now use the results recalled in Section~\ref{sec:virtually_residually_p}. As said above, $G$ is $\RR$-linear. Thus $G$ is virtually residually $p$ for all but finitely many primes $p$ by Platonov's theorem. Hence the same holds for $\Aut(G)$ by Lubotzky's result, and thus for $\Out(G)$ as well by the above splitting. Finally, if a group $G$ is virtually residually $p$ for some prime $p$ then it is obviously residually finite, and if moreover $G$ is virtually residually $q$ for some distinct prime $q$ then $G$ is virtually torsion-free by Remark~\ref{rmk:torsion-free}. This finishes the proof of Theorem~\ref{thm:Coxeter}.
	
	\begin{remark}[Universal Coxeter groups] Let $W_n = \langle x_1, \ldots, x_n \mid x_i^2 = 1\rangle \cong \ast_{i = 1}^n \ZZ/2\ZZ$ be the universal Coxeter group of rank $n$. Let $p : W_{n+1} \twoheadrightarrow W_n \cong W_{n+1} / \langle\!\langle x_{n+1} \rangle\!\rangle$ be the natural projection and $\iota : W_n \hookrightarrow W_{n+1}$ be the natural section of $p$. The subgroup of $\Aut(W_{n+1})$ which preserves the conjugacy class of $x_{n+1}$ has finite index. Thus the map $p$ induces a virtual map $p^\ast :  \Aut(W_{n+1}) \twoheadrightarrow \Aut(W_n)$ and $\iota$ induces a section $\iota^\ast : \Aut(W_n) \hookrightarrow \Aut(W_{n+1})$ of $p^\ast$. Now since $p^\ast (\Inn(W_{n+1}) = \Inn(W_n)$ it follows that $p^\ast$ induces a virtual map $\hat p : \Out(W_{n+1}) \twoheadrightarrow \Out(W_n)$. On the other hand is is not obvious that $\hat p$ has a virtual section since $\iota^\ast(\Inn(W_n)) \cap \Inn(W_{n+1}) = \{1\}$. However, if we let $\sigma_n$ be a virtual section of the map $\Aut(W_n) \to \Out(W_n)$ given by Theorem~\ref{thm:Coxeter} then one can produce a virtual map $\hat \iota$ so that the diagram of virtual maps in Figure~\ref{fig:section} commutes. In particular we recover the known fact that $\Out(W_n)$ virtually embeds in $\Out(W_{n+1})$.
	\end{remark}
	\begin{figure}[htb]
			\[ \begin{xy}
		  \xymatrix{
		     \Aut(W_n) \ar@{^{(}->}@<1ex>[r]^{\iota^\ast} \ar@{->>}[d] \ar@{}[dr] |{\text{virt.}} & \Aut(W_{n+1}) \ar@{->>}[l]^{p^\ast} \ar@{->>}[d]  \\
		     \Out(W_n) \ar@{^{(}-->}[r]^{\hat \iota} \ar@{^{(}->}@<1ex>[u]^{\sigma_n} & \Out(W_{n+1}) \ar@{->>}@<1ex>[l]^{\hat p} \ar@{_{(}->}@<-1ex>[u]_{\sigma_{n+1}}  
		  	}
			\end{xy}\]
			\caption{The diagram of virtual maps defining the virtual section $\hat \iota$ of $\hat p$.}
			\label{fig:section}
		\end{figure}
	
	\subsection{Fuchsian groups with torsion} Let $X$ be a closed hyperbolic $2$-orbifold with at least one cone point of order $n \geq 2$ or with at least one reflector corner (i.e. $X$ is not a closed surface nor a compact surface with each boundary component consisting of a single mirror), and let $G$ be its orbifold fundamental group. In other words, $G$ acts faithfully, properly discontinuously and cocompactly by isometries on the hyperbolic plane, and $G$ contains a rotation of angle $2\pi/n$ for some $n \geq 2$ (either coming from a cone point of order $n$ or from a reflector corner of angle $\pi/n$).
	
	The group $G$ is finitely generated and linear (indeed $G$ is a uniform lattice in $\PGL_2 \RR$). Hence by Malcev's theorem $G$ is residually finite. Thus Theorem~\ref{thm:geometricsplit} applies to the subgroup $H$ generated by the rotation, so that the map $\Aut(G) \to \Out(G)$ splits virtually.
	
	
	\subsection{Free products} Let $G = G_1 * H$ where $G$ is finitely generated and $H \neq \{1\}$ is freely indecomposable with finite center and such that $\Out(H)$ is finite (e.g. $H$ is finite). Since $G$ is finitely generated, it admits a Grushko decomposition as a free product of finitely many freely indecomposable groups $\{H_i\}_{1 \leq i \leq n}$ (one of which is conjugate to $H$) and a free group of finite rank $F_k$. We let $G$ act on $T$ a Bass-Serre tree corresponding to this free product decomposition, i.e. a tree on which $G$ acts cocompactly with trivial edge stabilizers and with each vertex stabilizer conjugate to some $H_i$. Since the conjugates of $H_i$ are the maximal freely indecomposable subgroups of $G$ (not isomorphic to $\ZZ$), it follows that $\Aut(G)$ preserves the family of elliptic subgroups. Finally, since edge stabilizers are trivial, it follows that $\Fix H$ consists of a single vertex of $T$. Thus all conditions of Proposition~\ref{prop:CAT(0)} are satisfied for the action of $G$ on $T$, so that $H$ is an \beep subgroup.
	
	Combining this with Proposition~\ref{prop:strategy} or Remark~\ref{remark:trivialcenter} we obtain
	\begin{corollary} Let $G = G_1 * H$ where $G$ is finitely generated and $H \neq \{1\}$ is freely indecomposable with $\Out(H)$ finite. Suppose moreover that one of the following holds
		\begin{itemize}
			\item either $G$ is residually finite and $H$ has finite center
			\item or $H$ has trivial center.
		\end{itemize}
		Then the map $\Aut(G) \to \Out(G)$ splits virtually.
	\end{corollary}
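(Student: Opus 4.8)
The plan is to verify that $H$ is an \beep subgroup of $G$ and then feed this into the two splitting criteria already available, choosing between them according to which hypothesis is in force: Proposition~\ref{prop:strategy} (combined with Baumslag's Theorem~\ref{theorem:Baumslag}) in the residually finite case, and Remark~\ref{remark:trivialcenter} in the trivial center case. The \beep property will come from Proposition~\ref{prop:CAT(0)} applied to the action of $G$ on the Bass--Serre tree $T$ of its Grushko decomposition, exactly as in the paragraph preceding the statement; note that this proposition requires only an isometric action on a complete $\CAT(0)$ space, not a proper one, which is essential here since the vertex stabilizers $H_i$ may be infinite.

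First I would dispose of the degenerate case $G_1 = \{1\}$: then $G = H$ is freely indecomposable with $\Out(G)$ finite, so $G$ is an \beep subgroup of itself and the map splits virtually for trivial reasons. Hence I may assume $G_1 \neq \{1\}$, so that $G$ is a genuine nontrivial free product and $\Centr(G) = \{1\}$. I would then check the four hypotheses of Proposition~\ref{prop:CAT(0)} for $H$ acting on $T$. Condition (1) is immediate since $\Centr(G) = \{1\}$ and $\Centr(H)$ is finite (indeed trivial in the second case). Condition (4) holds automatically because $\Out(H)$ is finite. Conditions (2) and (3) both rest on the structural fact recalled above, namely that the conjugates of the Grushko factors $H_i$ are precisely the maximal freely indecomposable subgroups of $G$ not isomorphic to $\ZZ$: since any $\alpha \in \Aut(G)$ preserves this canonically defined family, it permutes the finitely many conjugacy classes of the $H_i$, which yields preservation of the elliptic subgroups (condition 2) and finiteness of the number of conjugacy classes of maximal elliptic subgroups (condition 3). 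Finally, because the edge stabilizers of $T$ are trivial, $\Fix H$ is the single vertex fixed by $H$, hence bounded, and $H$ is maximal elliptic.

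With $H$ an \beep subgroup established, I would conclude as follows. In the first case $G$ is residually finite, so $\Aut(G)$ is residually finite by Theorem~\ref{theorem:Baumslag}, and Proposition~\ref{prop:strategy} produces the virtual splitting. In the second case $\Centr(H) = \{1\}$, so $H$ is an \beep subgroup with trivial center and Remark~\ref{remark:trivialcenter} directly gives a virtual splitting, with no residual finiteness needed. The one genuinely nonformal ingredient, which I expect to be the main obstacle, is the canonical characterization of the $H_i$ as the maximal non-cyclic freely indecomposable subgroups; this is the (essential) uniqueness of the Grushko decomposition and is exactly what forces $\Aut(G)$ to respect the elliptic subgroups. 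Everything else reduces to bookkeeping against the hypotheses of Proposition~\ref{prop:CAT(0)}.
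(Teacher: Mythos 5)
Your proposal is correct and follows essentially the same route as the paper: apply Proposition~\ref{prop:CAT(0)} to the action of $G$ on the Bass--Serre tree of its Grushko decomposition (using uniqueness of that decomposition to see that $\Aut(G)$ preserves elliptic subgroups, and triviality of edge stabilizers to see that $\Fix H$ is a single vertex), then conclude via Proposition~\ref{prop:strategy} with Theorem~\ref{theorem:Baumslag} in the residually finite case, or Remark~\ref{remark:trivialcenter} in the trivial-center case. Your explicit treatment of the degenerate case $G_1=\{1\}$ and the observation that $\Centr(G)=\{1\}$ for a nontrivial free product are minor tidy additions to the paper's argument, not a different method.
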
	
	
	
	\subsection{One-ended hyperbolic groups} Here, we use a similar argument as in the previous section but for the action of a one-ended hyperbolic group on its JSJ tree (see \cite{Sela97,Bowditch98}).
		\begin{corollary} Let $G$ be a one-ended hyperbolic group with a JSJ decomposition admitting a non-elementary rigid vertex. Suppose moreover that $G$ is either residually finite or torsion-free. Then the map $\Aut(G) \to \Out(G)$ splits virtually.
		\end{corollary}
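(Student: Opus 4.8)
The plan is to apply Proposition~\ref{prop:CAT(0)} to the action of $G$ on its JSJ tree $T$, taking $H$ to be the stabilizer of the given non-elementary rigid vertex $v$. A tree is a complete $\CAT(0)$ space, so this is a legitimate setting. First I would record the geometry of $H$ in $T$: since the edge groups of the JSJ decomposition are virtually cyclic (elementary) while $H$ is non-elementary, $H$ cannot be contained in any edge stabilizer; as two distinct fixed vertices of a tree force an edge between them to be fixed, this gives $\Fix H = \{v\}$. Hence $H$ is a maximal elliptic subgroup with bounded (indeed singleton) fixed-point set, and by Lemma~\ref{lemma:maxelliptic} we obtain $\Norm_G(H) = H$, placing us in the self-normalized situation.

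It then remains to verify the four hypotheses of Proposition~\ref{prop:CAT(0)}. Condition (3) is immediate: $G\backslash T$ is finite, so there are finitely many conjugacy classes of vertex groups, hence of maximal elliptic subgroups. Condition (1) holds because $H$, being a quasi-convex (hence hyperbolic) non-elementary subgroup, has finite center; here one notes that $\Centr(G)$, being finite and normal, fixes $T$ pointwise by minimality and so lies in $H$, making the quotient $\Centr(H)/\Centr(G)$ a finite group. Conditions (2) and (4) carry the real content and rely on the JSJ tree being \emph{canonical}, i.e. $\Aut(G)$-invariant in the sense of Bowditch \cite{Bowditch98} (and Sela \cite{Sela97} in the torsion-free case). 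Canonicity supplies an action of $\Aut(G)$ on $T$ permuting the conjugacy classes of vertex groups, which yields condition (2): any elliptic subgroup lies inside a vertex group, and its image under an automorphism lies inside a conjugate of a vertex group, hence is again elliptic.

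The main obstacle is condition (4), the finiteness of the image of $\varphi \colon \Out_H(G) \to \Out(H)$. I would argue as follows. An automorphism $\alpha$ fixing $H$ setwise induces, via canonicity, a tree automorphism $\hat\alpha$ with $\hat\alpha(gx)=\alpha(g)\hat\alpha(x)$; since $\hat\alpha(v)$ is stabilized by $\alpha(H)=H$ and $\Fix H = \{v\}$, we get $\hat\alpha(v)=v$. Thus $\hat\alpha$ permutes the edges incident to $v$, so $\alpha|_H$ permutes the $H$-conjugacy classes of the incident edge groups. Consequently the image of $\varphi$ lands in the relative outer automorphism group $\Out(H;\mathcal{E})$ preserving the peripheral structure $\mathcal{E}$ determined by the incident edge groups. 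Finiteness of $\Out(H;\mathcal{E})$ is precisely the rigidity of the vertex $v$: a relative version of Paulin's theorem \cite{Paulin} (see also \cite{Sela97, Bowditch98}) shows that an infinite $\Out(H;\mathcal{E})$ would force $H$ to split over an elementary subgroup relative to $\mathcal{E}$, contradicting rigidity. This establishes that $H$ is an \beep subgroup.

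Finally I would deduce the splitting. If $G$ is residually finite, then $\Aut(G)$ is residually finite by Theorem~\ref{theorem:Baumslag}, and Proposition~\ref{prop:strategy} gives the virtual splitting. If instead $G$ is torsion-free, then $H$ is a torsion-free non-elementary hyperbolic group and hence has trivial center, so Remark~\ref{remark:trivialcenter} yields the virtual splitting without invoking residual finiteness. I expect the only genuinely delicate point to be the bookkeeping around the canonical JSJ and the relative rigidity statement in the presence of torsion; the remaining verifications of the $\CAT(0)$ criterion are routine.
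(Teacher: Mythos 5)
Your proposal is correct and follows essentially the same route as the paper: take $H$ to be the non-elementary rigid vertex group acting on the JSJ tree, verify the hypotheses of Proposition~\ref{prop:CAT(0)} (singleton fixed-point set via elementary edge stabilizers, cocompactness, canonicity, and finiteness of the image of $\varphi$ via relative rigidity), and conclude via Theorem~\ref{theorem:Baumslag} and Proposition~\ref{prop:strategy} in the residually finite case or Remark~\ref{remark:trivialcenter} in the torsion-free case. The only difference is one of exposition: you supply the arguments (peripheral structures, the relative Paulin-type rigidity statement, the center computation) that the paper simply lists as known features of the JSJ decomposition.
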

		\begin{proof} Let $H$ be a non-elementary vertex group in the JSJ decomposition of $G$, and let $T$ be the corresponding Bass-Serre tree. Most properties we need are features of the JSJ decomposition:
			\begin{itemize}
				\item The action of $G$ on $T$ is cocompact, i.e. the quotient graph of groups has finitely many vertices and edges. In particular, there are finitely many conjugacy classes of maximal elliptic subgroups.
				\item $\Aut(G)$ preserves elliptic subgroups.
				\item Edge stabilizers are virtually cyclic, so that $\Fix H$ consists of a single vertex of $T$.
				\item The map $\varphi : \Out_H(G) \to \Out(H)$ is finite.
			\end{itemize}
			Note moreover that $H$ is a non-elementary hyperbolic group and hence has finite center (and thus trivial center if $G$ is torsion-free). Thus Proposition~\ref{prop:CAT(0)} applies and $H$ is an \beep subgroup. The result now follows from Remark~\ref{remark:trivialcenter} in the torsion-free case, or from Theorem~\ref{theorem:Baumslag} and Proposition~\ref{prop:strategy} in the residually finite case. 
		\end{proof}
		
	\subsection{One-relator groups with torsion} We show that Theorem~\ref{thm:main_sample} applies to all one-relator groups with torsion.
	\begin{corollary} Let $G$ be a one-relator group with torsion, i.e. $G = \langle x_1, \ldots, x_k \mid R^n\rangle$ for $n > 1$. Then the map $\Aut(G) \to \Out(G)$ splits virtually.
	\end{corollary}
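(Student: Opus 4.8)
The plan is to verify the three hypotheses of Theorem~\ref{thm:main_sample}. We may assume that $R$ is cyclically reduced and not a proper power: if $R = S^m$, then $R^n = S^{mn}$, so after renaming we take $R$ to be a genuine root and $n > 1$ its exponent. Then $H := \langle R \rangle \cong \ZZ/n\ZZ$ is cyclic of order exactly $n$. Since $G$ is given by a finite presentation it is finitely generated, so it remains to check that $H$ has finite centralizer in $G$ and that $G$ has finitely many conjugacy classes of finite subgroups; I claim $H$ is the required finite subgroup.

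Two of the ingredients follow from the basic structure theory of one-relator groups with torsion. First, residual finiteness: one-relator groups with torsion are residually finite, by a theorem of Wise (they are virtually special, hence residually finite), which supplies the standing hypothesis of Theorem~\ref{thm:main_sample}. Second, by B.~B.~Newman's Spelling Theorem every element of finite order in $G$ is conjugate to a power of $R$, so that every finite subgroup of $G$ is conjugate to a subgroup of $H \cong \ZZ/n\ZZ$. As $\ZZ/n\ZZ$ has only finitely many subgroups, $G$ has finitely many conjugacy classes of finite subgroups.

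The remaining, and essential, point is that $\Centr_G(H) = \Centr_G(R)$ is finite. I would deduce this from the classical fact, again rooted in Newman's analysis of one-relator groups with torsion, that the centralizer of a nontrivial torsion element is cyclic; since such a centralizer contains the torsion element $R$, it must be \emph{finite} cyclic, and in fact equal to $\langle R \rangle$. This is the main obstacle, and it is where the one-relator hypothesis does the real work: it cannot be extracted from hyperbolicity of $G$ alone, since a hyperbolic group may perfectly well contain a copy of $\ZZ/n\ZZ \times \ZZ$ (a virtually cyclic, hence admissible, subgroup), inside which a torsion element would have infinite centralizer. Ruling out such a configuration genuinely uses the special structure of one-relator groups with torsion. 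With the finiteness of $\Centr_G(H)$ in hand, all hypotheses of Theorem~\ref{thm:main_sample} hold for $H = \langle R \rangle$, and the theorem yields that the map $\Aut(G) \to \Out(G)$ splits virtually, and in particular that $\Out(G)$ is residually finite.
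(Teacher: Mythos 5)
Your proposal is correct and takes essentially the same approach as the paper: apply Theorem~\ref{thm:main_sample} to $H=\langle R\rangle$, with residual finiteness supplied by Wise's work and the torsion hypotheses by B.~B.~Newman's results (the paper phrases the centralizer condition as $\Norm_G(H)=H$ via the Spelling Theorem, while you phrase it via cyclicity of centralizers of torsion elements; for a finite subgroup these are interchangeable). One small caution: your step from ``every torsion element is conjugate to a power of $R$'' to ``every finite subgroup of $G$ is conjugate into $\langle R\rangle$'' is not a formal implication (elementwise conjugacy into a cyclic group does not by itself conjugate the whole subgroup), but the subgroup-level statement is itself a classical theorem of Fischer--Karrass--Solitar, which is exactly what the paper quotes directly.
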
 
	\begin{proof} Let $H = \langle R \rangle$. In order to apply Theorem~\ref{thm:main_sample} we collect known results:
	\begin{itemize}
		\item Any finite subgroup of $G$ is conjugate to a subgroup of $H$, so that $G$ has finitely many conjugacy classes of finite subgroups.
		\item Newman's Spelling Theorem \cite{Newman68} implies that $\Norm_G(H) = H$.
		\item It follows from the recent body of work of Wise and collaborators, announced in \cite{Wise09} that one-relator groups with torsion are linear over $\mathbf Z$ and in particular residually finite. \qedhere
	\end{itemize}
	\end{proof}

	

	\bibliographystyle{amsalpha}
	\bibliography{biblio}

\end{document}